\providecommand{\U}[1]{\protect \rule{.1in}{.1in}}
\newenvironment{proof}[1][Proof]{\noindent \textbf{#1.} }{\  \rule{0.5em}{0.5em}}
\newtheorem{remark}{Remark}
\newtheorem{theorem}{Theorem}
\newtheorem{lemma}{Lemma}
\newtheorem{example}{Example}
\newtheorem{assumption}{Assumption}
\begin{document}

\title{Stabilization of linear systems with multiple unknown time-varying input
delays by linear time-varying feedback}
\author{Bin Zhou, Kai Zhang \thanks{The authors are with the Center for Control Theory
and Guidance Technology, Harbin Institute of Technology, Harbin, 150001,
China, and also with National Key Laboratory of Complex System Control and Intelligent Agent Cooperation, Harbin 150001, China.  Email: \texttt{
binzhou@hit.edu.cn, kaizhang2023@hit.edu.cn}.}}
\date{}
\maketitle

\begin{abstract}
This paper addresses the stabilization of linear systems with multiple
time-varying input delays. In scenarios where neither the exact delays
information nor their bound is known, we propose a class of linear
time-varying state feedback controllers by using the solution to a parametric
Lyapunov equation (PLE). By leveraging the properties of the solution to the
PLE and constructing a time-varying Lyapunov-Krasovskii-like functional, we
prove that (the zero solution of) the closed-loop system is asymptotically
stable. Furthermore, this result is extended to the observer-based output
feedback case. The notable characteristic of these controllers is their
utilization of linear time-varying gains. Furthermore, they are designed
entirely independent of any knowledge of the time delays, resulting in
controllers that are exceedingly easy to implement. Finally, a numerical
example demonstrates the effectiveness of the proposed approaches.

\textbf{Keywords: }Unknown input delay, Parametric Lyapunov equation, Linear
time-varying feedback, Asymptotic stabilization.

\end{abstract}

\section{Introduction}

Time delay, prevalent across engineering systems such as nuclear reactors,
hydraulic systems, manufacturing processes, digital control systems, and
remote control setups, is widely recognized as a significant contributor to
performance degradation and instability in control systems \cite{lixiaodi2021}%
, \cite{zhangjin2022}, \cite{zhuyang2020}. Hence, control of time-delay
systems \cite{Basin2008}, \cite{Fridman2020}, \cite{lihanfeng2021},
particularly those concerning asymptotic stability analysis and stabilization
\cite{huangyibo2021}, has been a focal point for research over decades.
Extensive literature has investigated various types of time-delay systems,
leading to a rich collection of findings and insights. Generally speaking, for
stability analysis, tools are available in both the frequency domain
\cite{gukeqi2003}, \cite{lam1990}, \cite{Michiels2005},
\cite{Michiels2005auto}, \cite{Olgacand2014} and the time domain
\cite{xuxiang2020}, \cite{yuxin2023}, each possessing equivalent power in
certain respects. Specifically, within the time domain, one of the most
efficient methods for stability analysis is arguably the Lyapunov-Krasovskii
functional-based approach, as exemplified by studies \cite{Chaillet2022},
\cite{Fridman2006}, \cite{Selivanov2019}, \cite{wang2014} and
\cite{zhangbaoyong2015}. This method, along with the Razumikhin theorem-based
approach \cite{Efimov2020}, \cite{Nekhoroshikh2023}, \cite{Zhangjunfeng2021},
aims to identify a positive-definite functional whose derivative along the
trajectories of the time-delay system is negative. The outcomes derived from
these methods can often be reformulated into linear matrix inequalities
(LMIs), which can be efficiently solved numerically.

For stabilizing time-delay systems, two primary approaches are commonly
employed. The first approach centers on locating finite-dimensional
controllers, leading to an infinite-dimensional closed-loop system. Here are a
few brief examples to illustrate this method. By utilizing the LMI-based
robust controller design methodology, a time-delay independent control
strategy was developed in \cite{yuedong2004}. This approach ensures system
stability while also enabling the closed-loop system to withstand the
uncertainties associated with time delay within certain predefined limits. For
a class of linear systems with time-varying state and input delays, a
so-called integral inequality method was proposed in \cite{zhangxianming2005}
to tackle the delay-dependent stabilization problem. However, in the setup of
finite-dimensional controllers, stability typically hinges on the delay,
making it challenging to guarantee stability for arbitrarily large delays. In
contrast, the second approach focuses on designing infinite-dimensional
controllers. These controllers aim to emulate the behavior of a delay-free
system within the closed-loop system, thereby ensuring stability regardless of
the delay. One highly efficient method falling within this category is
predictor feedback. This method is particularly effective for input delay
systems, even those that are unstable. However, as observed by
\cite{Krstic2010TAC}, existing studies predominantly address systems with
constant delays, offering limited insights into scenarios involving
time-varying input delays. While Artstein initially proposed predictor
feedback for time-varying input delay systems in \cite{Artstein1982}, the
intricacies of its design remain under-specified, primarily due to its focus
on plants with temporal variations. In a recent development, the author in
\cite{Krstic2010TAC} demonstrated exponential asymptotic stability for
feedback systems experiencing time-varying input delays. This achievement
involved constructing a Lyapunov-Krasovskii functional using a backstepping
transformation featuring time-varying kernels. However, the predictor feedback
employs the infinite-dimensional term, which encounters difficulties in actual
implementation \cite{Fridman2014book}, \cite{zhou14book}. By carefully
ignoring the infinite-dimensional terms, authors in \cite{Zhou2012},
\cite{zhou2009}, \cite{zhou2014TAC} proposed a truncated predictor feedback
controller which only contains a finite dimensional term and is easy to implement.

It is essential to highlight that the aforementioned results necessitate
precise information regarding the time delay \cite{Krstic2010TAC},
\cite{Zhou2012} or at least knowledge of its upper or lower bounds
\cite{yuedong2004}, \cite{zhangxianming2005}, \cite{zhou2009}. Consequently,
these findings may not be applicable in scenarios where the time delay is
unknown, and neither its upper nor lower bounds are ascertainable. In fact,
addressing stabilization problems in such contexts presents a significant
challenge, reflected in the scarcity of literature reports on related results.
This challenge arises because the parameters within the feedback gain are
contingent on knowledge of the time delays and/or their bounds. Without this
critical information, the design of state feedback lacks a solid foundation.
In order to solve this problem, one feasible approach is to design a
time-varying controller by the concept of adaptive control. In \cite{wei2019},
the delay-independent truncated predictor feedback controller proposed in
\cite{zhou2009} was generalized to linear systems with input delay where
neither the exact delay information nor its upper bound is known. This
controller, featuring two update laws with a switching mechanism, aims to
regulate both the states and the control input of the closed-loop system to
zero. However, the effectiveness of this method has only been demonstrated for
single constant input delay, leaving its performance in scenarios involving
multiple time-varying input delays yet to be verified. Additionally, the
controller in \cite{wei2019} is nonlinear and requires online solution of
differential equations, which imposes computational burden. That is to say,
even with the multitude of available control strategies, stabilizing linear
systems with multiple time-varying input delays using a feedback law
independent of any knowledge of delay remains an unresolved challenge. This
challenge is particularly pronounced in scenarios where the controller is
limited to be linear.

Motivated by the preceding statement, this paper focuses on investigating the
stabilization problem of a class of linear systems with multiple time-varying
input delays, where both the exact delays information and their bounds are
unknown. We introduce a novel time-varying linear state feedback controller
based on the solution to a parametric Lyapunov equation (PLE). Unlike existing
approaches \cite{wei2019} that require online solution of differential
equations, our proposed controllers feature pre-designed time-varying
parameters off-line, substantially alleviating computational burden. In order
to establish the stability of the closed-loop system, we further investigate
the properties of the solution to the PLE. Leveraging the properties along
with a time-varying Lyapunov-Krasovskii-like functional, we validate that both
the states and inputs of the closed-loop system can asymptotically approach to
zero. Our results extend existing findings in three significant aspects:
moving from scenarios with known delay information \cite{weiyusheng2019SCL},
\cite{Zhou2012}, \cite{zhou2009}, \cite{zhou2014TAC}, \cite{zhou14book} to
those with unknown delay information, transitioning from constant
single-input-delay systems \cite{wei2019}, \cite{weiyusheng2019SCL} to
time-varying multi-input-delays systems, and shifting from nonlinear
controllers \cite{wei2019} to linear controllers. We also extend the state
feedback to the observer-based output feedback case. Finally, a numerical
example is provided to demonstrate the effectiveness of the proposed approaches.

\textit{Notation:} Throughout this paper, the following conventions are
adopted for the matrix $A$: $A^{\mathrm{T}}$ denotes its transpose,
$\left \Vert A\right \Vert $ represents its Euclidean norm, $\lambda(A) $ stands
for its eigenvalue set, $\lambda_{i}(A)$ signifies the $i$-th eigenvalue,
$\mathrm{Re}\{ \lambda_{i}(A)\} $ denotes the real part of the $i$-th
eigenvalue, and $\mathrm{tr}(A)$ denotes its trace. Furthermore, $\phi(A)$
represents $\min_{i=1,2,\ldots n}\{ \mathrm{Re}(\lambda_{i}(A))\} $,
indicating the minimum real part of the eigenvalues of $A\in \mathbf{R}%
^{n\times n}$. Additionally, $\alpha(A)$ denotes the maximal order of the
Jordan blocks associated with an eigenvalue $\lambda_{i}(A)$ such that
$\mathrm{Re}( \lambda_{i}(A))=\phi(A)$, ensuring that $\alpha(A)\geq1$. If $A$
is symmetric, then $\lambda_{\mathrm{\min}}(A)$ and $\lambda_{\mathrm{\max}%
}(A)$ respectively represent its minimal and maximal eigenvalues, and $A>0$
denotes that $A$ is a positive definite matrix. For any matrix $A=
[a_{1},a_{2},\ldots,a_{n}] \in \mathbf{R}^{m\times n}$, the stretching function
is defined as $\mathrm{vec}(A) =[a_{1}^{\mathrm{T}},a_{2}^{\mathrm{T}}%
,\ldots,a_{n}^{\mathrm{T}}]^{\mathrm{T}}$. For two integers $k_{1}$ and
$k_{2}$ with $k_{1}\leq k_{2}$, $\mathbf{I}_{k_{1}}^{k_{2}}$ denotes $\{
k_{1},k_{1}+1,\ldots,k_{2} \} $.

\section{Problem Formulation and Preliminaries}

\subsection{Problem Formulation}

Consider the following linear system with multiple time-varying input delays
\begin{equation}
\left \{
\begin{array}
[c]{l}%
\dot{x}(t)=Ax(t)+\sum \limits_{i=1}^{q}B_{i}u(t-\tau_{i}(t)),\\
y(t) =Cx(t) ,
\end{array}
\right.  \label{sys}%
\end{equation}
where $x\in \mathbf{R}^{n}$, $u\in \mathbf{R}^{m}$ and $y\in \mathbf{R}^{p}$ are
the state, input and output, respectively, $A\in \mathbf{R}^{n\times n}$,
$B_{i}\in \mathbf{R}^{n\times m},i\in \mathbf{I}_{1}^{q}$, and $C\in
\mathbf{R}^{p\times n}$ are constant matrices, $\tau_{i}(t),i\in \mathbf{I}%
_{1}^{q}$ are the time-varying input delays. Stabilization of system
(\ref{sys}) stands out as a pivotal concern within the time-delay control
systems, garnering significant attention. Notably, the method for designing
controllers in scenarios where $\tau_{i}(t),i\in \mathbf{I}_{1}^{q}$ are large
yet bounded was elucidated in \cite{zhou14book}. Denote
\[
B=B_{1}+B_{2}+\cdots+B_{q}.
\]
Then, the derivation of this outcome is underpinned by a set of specific assumptions.

\begin{assumption}
\label{ass1}The matrix pair $(A,B)$ is controllable and all the eigenvalues of
$A$ are zeros.
\end{assumption}

\begin{remark}
The condition outlined in Assumption \ref{ass1} can be extended to scenarios
where $(A,B)$ is stabilizable, and the eigenvalues of matrix $A$ are located
at the origin or in the left half-plane. Given that stable eigenvalues of $A$
do not impact the stabilizability of the system, we choose to impose
Assumption \ref{ass1} ultimately. It is worth noting that the assumptions on
$A$ presented in this paper align with those in \cite{wei2019}. However, our
method can effectively manage multiple time-varying input delays by linear
time-varying feedback, a characteristic not covered in \cite{wei2019}.
\end{remark}

\begin{assumption}
\label{ass2}There exists a known constant $\bar{\tau}>0$ such that $0\leq
\tau_{i}(t)\leq \bar{\tau},\ i\in \mathbf{I}_{1}^{q},\forall t\geq0.$
\end{assumption}

Let the initial condition of system (\ref{sys}) be $x(0)=x_{0},u(\theta
)=u_{0}(\theta), \theta \in[-\tau(0),0)$. The following result was proven in
\cite{zhou14book}.

\begin{lemma}
\label{lemma0} Assume that $(A,B) $ satisfies Assumption \ref{ass1} and the
delays $\tau_{i}(t),\ i\in \mathbf{I}_{1}^{q}$ satisfy Assumption \ref{ass2}.
Let $P(\gamma) $ be the unique positive definite solution to the PLE
\cite{zhou14book}
\begin{equation}
A^{\mathrm{T}}P+PA-PBB^{\mathrm{T}}P=-\gamma P. \label{ple}%
\end{equation}
Then system (\ref{sys}) can be stabilized by the following linear state
feedback%
\begin{equation}
u(t)=-B^{\mathrm{T}}P(\gamma)x(t),\quad \forall \gamma \in(0,\gamma^{\ast}]
,\quad \forall t \geq0, \label{ss}%
\end{equation}
where $\gamma^{\ast}\in(0, 1/((1+\sqrt{3})n\sqrt{n}\bar{\tau})) >0$ is a
constant depending on $\bar{\tau}$.
\end{lemma}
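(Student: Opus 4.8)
The plan is to run the truncated‑predictor‑feedback argument: rewrite the closed loop as a nominal exponentially stable system perturbed by a distributed delay term, exploit the fine structure of $P(\gamma)$, and close a Lyapunov--Krasovskii estimate whose negativity is furnished by the $-\gamma P$ on the right of the PLE~(\ref{ple}). First I would substitute (\ref{ss}) into (\ref{sys}) and, using $\sum_{i=1}^{q}B_{i}=B$ together with $x(t)-x(t-\tau_{i}(t))=\int_{t-\tau_{i}(t)}^{t}\dot{x}(s)\,\mathrm{d}s$, bring the closed loop to the form
\begin{equation}
\dot{x}(t)=(A-BB^{\mathrm{T}}P)x(t)+\sum_{i=1}^{q}B_{i}B^{\mathrm{T}}P\int_{t-\tau_{i}(t)}^{t}\dot{x}(s)\,\mathrm{d}s, \label{pfcl}
\end{equation}
in which $A-BB^{\mathrm{T}}P$ is the delay‑free closed‑loop matrix and the integral is a perturbation that disappears when the delays vanish.

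I would then collect the properties of $P=P(\gamma)$ that drive the estimates; these are standard for the PLE~(\ref{ple}) when $\mathrm{tr}(A)=0$ (see \cite{zhou14book}). (i) $P>0$, $P$ is monotonically increasing in $\gamma$, $\lim_{\gamma\to0^{+}}P(\gamma)=0$, and $\lambda_{\max}(P(\gamma))\le c_{1}\gamma$ for small $\gamma$. (ii) Multiplying (\ref{ple}) by $P^{-1}$, taking traces, and using $\mathrm{tr}(A)=0$ gives $\mathrm{tr}(B^{\mathrm{T}}PB)=n\gamma$; hence $B^{\mathrm{T}}PB\le n\gamma I$, $\|P^{1/2}B\|^{2}\le n\gamma$ and $PBB^{\mathrm{T}}P\le n\gamma P$. (iii) Rearranging (\ref{ple}), $(A-BB^{\mathrm{T}}P)^{\mathrm{T}}P+P(A-BB^{\mathrm{T}}P)=-\gamma P-PBB^{\mathrm{T}}P\le-\gamma P$. (iv) Crucially, $A^{\mathrm{T}}PA\le c_{2}\gamma^{2}P$ with $c_{2}$ depending only on $n$: indeed $M:=P^{1/2}AP^{-1/2}$ is nilpotent (similar to $A$) while $M+M^{\mathrm{T}}=P^{1/2}BB^{\mathrm{T}}P^{1/2}-\gamma I$ has norm $O(\gamma)$ by (ii), so $\|M\|=O(\gamma)$ and $A^{\mathrm{T}}PA=P^{1/2}M^{\mathrm{T}}MP^{1/2}\le\|M\|^{2}P$; combined with (ii) this yields $(A-BB^{\mathrm{T}}P)^{\mathrm{T}}P(A-BB^{\mathrm{T}}P)\le c_{3}\gamma^{2}P$.

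Next I would take the Lyapunov--Krasovskii‑like functional
\begin{equation}
V(t)=x^{\mathrm{T}}(t)P(\gamma)x(t)+\rho\int_{-\bar{\tau}}^{0}\int_{t+\theta}^{t}\dot{x}^{\mathrm{T}}(s)P(\gamma)\dot{x}(s)\,\mathrm{d}s\,\mathrm{d}\theta, \label{pflkf}
\end{equation}
with a free constant $\rho>0$. Differentiating $x^{\mathrm{T}}Px$ along (\ref{pfcl}) and substituting (\ref{ple}) gives $\frac{\mathrm{d}}{\mathrm{d}t}(x^{\mathrm{T}}Px)=-\gamma x^{\mathrm{T}}Px-\|B^{\mathrm{T}}Px\|^{2}+2\sum_{i}x^{\mathrm{T}}PB_{i}B^{\mathrm{T}}P\int_{t-\tau_{i}(t)}^{t}\dot{x}(s)\,\mathrm{d}s$. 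Dropping $-\|B^{\mathrm{T}}Px\|^{2}\le0$ and bounding the cross term by Young's inequality, (i)--(ii), and the Jensen estimate $\left\|\int_{t-\tau_{i}(t)}^{t}P^{1/2}\dot{x}(s)\,\mathrm{d}s\right\|^{2}\le\bar{\tau}\int_{t-\bar{\tau}}^{t}\dot{x}^{\mathrm{T}}(s)P\dot{x}(s)\,\mathrm{d}s$, one obtains $\frac{\mathrm{d}}{\mathrm{d}t}(x^{\mathrm{T}}Px)\le-\tfrac12\gamma x^{\mathrm{T}}Px+c_{4}\gamma\bar{\tau}\int_{t-\bar{\tau}}^{t}\dot{x}^{\mathrm{T}}P\dot{x}\,\mathrm{d}s$. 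Differentiating the double integral gives $\rho\bar{\tau}\dot{x}^{\mathrm{T}}(t)P\dot{x}(t)-\rho\int_{t-\bar{\tau}}^{t}\dot{x}^{\mathrm{T}}P\dot{x}\,\mathrm{d}s$, and estimating $\dot{x}^{\mathrm{T}}(t)P\dot{x}(t)$ from (\ref{pfcl}) via (iv) and (ii) yields $\dot{x}^{\mathrm{T}}(t)P\dot{x}(t)\le c_{5}\gamma^{2}x^{\mathrm{T}}Px+c_{6}\gamma^{2}\bar{\tau}\int_{t-\bar{\tau}}^{t}\dot{x}^{\mathrm{T}}P\dot{x}\,\mathrm{d}s$. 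Adding up,
\[
\dot{V}\le\Bigl(-\tfrac12\gamma+c_{5}\rho\gamma^{2}\bar{\tau}\Bigr)x^{\mathrm{T}}Px+\Bigl(c_{4}\gamma\bar{\tau}+c_{6}\rho\gamma^{2}\bar{\tau}^{2}-\rho\Bigr)\int_{t-\bar{\tau}}^{t}\dot{x}^{\mathrm{T}}P\dot{x}\,\mathrm{d}s ,
\]
and taking $\rho$ of order $1/(\gamma\bar{\tau})$ renders both coefficients negative as soon as $\gamma\bar{\tau}$ stays below an explicit threshold of the form $\gamma\bar{\tau}<1/((1+\sqrt{3})n\sqrt{n})$ --- the factors $n\sqrt{n}$ and $1+\sqrt{3}$ arising from the $n\gamma$ in (ii), the $n$‑dependence of $c_{3}$ in (iv), and the optimization over $\rho$ and the Young weights. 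This gives $\dot{V}\le-\sigma V$ for some $\sigma>0$, and the Lyapunov--Krasovskii theorem then delivers exponential (hence asymptotic) stability of the zero solution, so any $\gamma^{\ast}$ in the interval $(0,1/((1+\sqrt{3})n\sqrt{n}\bar{\tau}))$ is admissible, which proves the claim.

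I expect step (iv) to be the main obstacle, and it is where Assumption~\ref{ass1} (nilpotency of $A$) is indispensable: a crude bound yields only $A^{\mathrm{T}}PA=O(\gamma)P$, with which the functional (\ref{pflkf}) would force $\rho=O(1/\bar{\tau})$ and deliver merely $\gamma^{\ast}=O(1/\bar{\tau}^{2})$; it is precisely the extra power of $\gamma$ in $A^{\mathrm{T}}PA\le c_{2}\gamma^{2}P$ that lets $\rho$ grow like $1/(\gamma\bar{\tau})$ and recovers one power of $\bar{\tau}$, matching the claimed $\gamma^{\ast}=O(1/(n\sqrt{n}\,\bar{\tau}))$. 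Keeping the dependence on $n$ sharp through the multiple input matrices $B_{i}$, so that $n\sqrt{n}$ rather than a larger power of $n$ appears, is the remaining bookkeeping.
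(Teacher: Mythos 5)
The paper does not actually prove this lemma --- it is imported verbatim from \cite{zhou14book} --- and your proposal reconstructs essentially the argument of that reference: the same rewriting of the closed loop as a delay-free part $A-BB^{\mathrm{T}}P$ plus a distributed-delay perturbation $\sum_{i}B_{i}B^{\mathrm{T}}P\int_{t-\tau_{i}}^{t}\dot{x}\,\mathrm{d}s$, the same PLE properties ($\mathrm{tr}(B^{\mathrm{T}}PB)=n\gamma$, $PBB^{\mathrm{T}}P\leq n\gamma P$, and $A^{\mathrm{T}}PA\leq cn^{2}\gamma^{2}P$, which is exactly (\ref{pleP3})), and the same type of Lyapunov--Krasovskii functional with the double-integral term weighted by $\rho\sim 1/(\gamma\bar{\tau})$. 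Your one unproved assertion --- that a nilpotent $M=P^{1/2}AP^{-1/2}$ with $\Vert M+M^{\mathrm{T}}\Vert=O(\gamma)$ satisfies $\Vert M\Vert=O(\gamma)$ --- is correct and should be justified via $\mathrm{tr}(M^{2})=\mathrm{tr}(A^{2})=0$, which gives $\Vert M\Vert^{2}\leq\mathrm{tr}(MM^{\mathrm{T}})=\tfrac{1}{2}\mathrm{tr}\bigl((M+M^{\mathrm{T}})^{2}\bigr)\leq\tfrac{n}{2}\Vert M+M^{\mathrm{T}}\Vert^{2}$; the exact constant $1/((1+\sqrt{3})n\sqrt{n})$ is immaterial for the statement as written, since stabilizability for all $\gamma\in(0,\gamma^{\ast}]$ is preserved under shrinking $\gamma^{\ast}$.
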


It can be seen that the stability of the closed-loop system can only be
ensured after determining the range of $\gamma$, which relies on knowing the
upper bound $\bar{\tau}$ of the time delays. In other words, if we do not know
the upper bound $\bar{\tau}$, we cannot choose an appropriate $\gamma$ to
ensure the stability of the closed-loop system. In order to solve this
problem, in this paper we will design a time-varying $\gamma$. The basic
design idea is to gradually reduce $\gamma(t)$ to ensure that condition
(\ref{ss}) is met, thereby ensuring the stability of the closed-loop system.
Since we have no prior information about $\bar{\tau}$, $\gamma(t)$ must always
decrease (namely, $\dot{\gamma}(t)<0$), that is, it decreases to $0$ as time
$t$ approaches to infinity. Obviously, this will bring about another problem,
that is, the control gain $B^{\mathrm{T}}P(\gamma)$ will also tend to zero as
the time $t$ approaches to infinity. In such case, the system tends to operate
in an open loop. Therefore, we must design a suitable function $\gamma(t)$ and
strictly analyze the stability of the closed-loop system. This is the first
key problem to be solved in this paper.

If only $y(t)$ is available, we are interested in designing an observer-based
controller to asymptotically stabilize system (\ref{sys}). When $\tau_{i}(t),
\ i\in \mathbf{I}_{1}^{q}$ are known, it is intuitive to devise an
observer-based controller described by%
\begin{equation}
\left \{
\begin{array}
[c]{l}%
\dot{\xi}(t)=A\xi(t)+ \sum \limits_{i=1}^{q}B_{i}u(t- \tau_{i}(t))+L(y(t)-C\xi
(t)) ,\\
u(t)=-{B^{\mathrm{T}}P(\gamma(t))}\xi(t).
\end{array}
\right.  \label{eq4.0}%
\end{equation}
However, as $\tau_{i}(t), \ i\in \mathbf{I}_{1}^{q}$ are unknown, the term
$\mathit{\Sigma}_{i=1}^{q}B_{i}u(t-\tau_{i}(t))$ is unavailable. Consequently,
the controller (\ref{eq4.0}) is unsuitable for addressing the current problem.
Therefore, the design of a suitable observer-based controller becomes another
key problem to be addressed in this paper.

\subsection{Properties of Parametric Lyapunov Equations}

In this subsection, we collect some key properties of the PLE (\ref{ple}) for
our future use.

\begin{lemma}
\label{lemma1}Let $(A,B) $ be controllable.

\begin{itemize}
\item The PLE (\ref{ple}) has a (unique) positive definite solution
$P(\gamma)$ if and only if%
\begin{equation}
\gamma>-2\phi(A) . \label{plePP1}%
\end{equation}
Moreover, $P(\gamma)$ is given by $P(\gamma)=W^{-1}(\gamma) $, which satisfies
the Lyapunov equation%
\begin{equation}
\left(  A+\frac{1}{2}\gamma I_{n}\right)  W+W\left(  A+\frac{1}{2}\gamma
I_{n}\right)  ^{\mathrm{T}}=BB^{\mathrm{T}}. \label{2}%
\end{equation}

\item Let $\gamma$ satisfy (\ref{plePP1}) and denote $\pi(\gamma)
=2\mathrm{tr}(A) +n\gamma.$ Then $P(\gamma)$ has the following properties%
\begin{align}
\frac{\mathrm{d}P(\gamma)}{\mathrm{d}\gamma}  &  >0, \quad \mathrm{tr}%
(B^{\mathrm{T}}PB) =\pi(\gamma) ,\nonumber \\
\mathrm{tr}(PAP^{-1}A^{\mathrm{T}})  &  \leq \frac{n-1}{2n}\pi^{2}(\gamma)
+\frac{2}{n}\mathrm{tr}^{2}(A) -\mathrm{tr}(A^{2}) . \label{plePP6}%
\end{align}

\item If all eigenvalues of $A$ are identical and purely real, then%
\begin{equation}
\frac{\mathrm{d}P(\gamma)}{\mathrm{d}\gamma}\leq \frac{\delta_{\mathrm{c}%
}P(\gamma)}{\pi(\gamma) },\  \forall \gamma>-2\phi( A) , \label{plePP3}%
\end{equation}
where $\delta_{\mathrm{c}}$ is a constant independent of $\gamma$ and can be
determined by%
\begin{equation}
\delta_{\mathrm{c}}=\sup_{\gamma>-2\phi(A) }\{ \delta_{\mathrm{c}}^{\gamma
}\},\  \delta_{\mathrm{c}}^{\gamma}=\pi( \gamma) \lambda_{\max}(W^{-\frac{1}%
{2}}UW^{-\frac{1}{2}}) , \label{plePP7}%
\end{equation}
where $U=U(\gamma) $ is the unique positive definite solution to the Lyapunov
equation%
\begin{equation}
\left(  A+\frac{1}{2}\gamma I_{n}\right)  U+U\left(  A+\frac{1}{2}\gamma
I_{n}\right)  ^{\mathrm{T}}=W. \label{plePP8}%
\end{equation}

\end{itemize}
\end{lemma}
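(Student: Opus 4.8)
The plan is to reduce every claim to the companion Lyapunov equation (\ref{2}), treating the three bullets in turn. For \textbf{Part 1}, I would left- and right-multiply the PLE (\ref{ple}) by $W:=P^{-1}$ to get $AW+WA^{\mathrm{T}}-BB^{\mathrm{T}}=-\gamma W$, which is exactly (\ref{2}) with $M:=A+\frac{1}{2}\gamma I_{n}$; conversely any positive definite solution $W$ of (\ref{2}) yields $P=W^{-1}$ solving (\ref{ple}). So existence/uniqueness of a positive definite $P(\gamma)$ is equivalent to the same for $W(\gamma)$, and since $(A,B)$ — hence $(M,B)$ — is controllable, standard Lyapunov theory gives that (\ref{2}) has a unique positive definite solution iff $M$ has all its eigenvalues in the open right half-plane, i.e. $\mathrm{Re}(\lambda_{i}(A))+\frac{1}{2}\gamma>0$ for all $i$, i.e. $\gamma>-2\phi(A)$; in that case $W(\gamma)=\int_{0}^{\infty}e^{-Mt}BB^{\mathrm{T}}e^{-M^{\mathrm{T}}t}\,\mathrm{d}t>0$, positive definiteness again using controllability, and the ``only if'' direction uses controllability to rule out eigenvalues of $M$ on the imaginary axis.

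\textbf{Part 2.} Differentiating (\ref{2}) in $\gamma$ gives $MW'+W'M^{\mathrm{T}}=-W$ with $W':=\mathrm{d}W/\mathrm{d}\gamma$; since $M$ has spectrum in the open right half-plane and $W>0$, the integral representation with right-hand side $-W$ forces $W'=-\int_{0}^{\infty}e^{-Mt}We^{-M^{\mathrm{T}}t}\,\mathrm{d}t<0$, hence $\mathrm{d}P/\mathrm{d}\gamma=-W^{-1}W'W^{-1}=-PW'P>0$. For the trace identity I would rewrite (\ref{2}) as $AW+WA^{\mathrm{T}}+\gamma W=BB^{\mathrm{T}}$, right-multiply by $P=W^{-1}$ to obtain the key identity $BB^{\mathrm{T}}P=A+\gamma I_{n}+WA^{\mathrm{T}}W^{-1}$, and take traces, using $\mathrm{tr}(WA^{\mathrm{T}}W^{-1})=\mathrm{tr}(A)$, so that $\mathrm{tr}(B^{\mathrm{T}}PB)=\mathrm{tr}(BB^{\mathrm{T}}P)=2\,\mathrm{tr}(A)+n\gamma=\pi(\gamma)$. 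For the last estimate, I would square the identity $BB^{\mathrm{T}}P=A+\gamma I_{n}+WA^{\mathrm{T}}W^{-1}$ and take the trace; the cyclic property and invariance of trace under similarity collapse all terms to $\mathrm{tr}((B^{\mathrm{T}}PB)^{2})=2\,\mathrm{tr}(A^{2})+n\gamma^{2}+4\gamma\,\mathrm{tr}(A)+2\,\mathrm{tr}(PAP^{-1}A^{\mathrm{T}})$, where one also uses $\mathrm{tr}((BB^{\mathrm{T}}P)^{2})=\mathrm{tr}((B^{\mathrm{T}}PB)^{2})$. Solving for $\mathrm{tr}(PAP^{-1}A^{\mathrm{T}})$, bounding $\mathrm{tr}((B^{\mathrm{T}}PB)^{2})\leq(\mathrm{tr}(B^{\mathrm{T}}PB))^{2}=\pi^{2}(\gamma)$ (valid since $B^{\mathrm{T}}PB\geq0$), and substituting $\gamma=(\pi(\gamma)-2\,\mathrm{tr}(A))/n$ then reproduces (\ref{plePP6}) after routine algebra.

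\textbf{Part 3.} Comparing $MW'+W'M^{\mathrm{T}}=-W$ with (\ref{plePP8}) identifies $W'=-U$, hence $\mathrm{d}P/\mathrm{d}\gamma=PUP$. Since all eigenvalues of $A$ equal the real number $\phi(A)$, we have $\mathrm{tr}(A)=n\phi(A)$ and $\pi(\gamma)=n(\gamma+2\phi(A))>0$, so the asserted bound $\mathrm{d}P/\mathrm{d}\gamma\leq\delta_{\mathrm{c}}P/\pi(\gamma)$, i.e. $PUP\leq(\delta_{\mathrm{c}}/\pi(\gamma))P$, becomes — after a congruence by $P^{-1/2}=W^{1/2}$ — $\lambda_{\max}(P^{1/2}UP^{1/2})=\lambda_{\max}(W^{-1/2}UW^{-1/2})\leq\delta_{\mathrm{c}}/\pi(\gamma)$, i.e. $\delta_{\mathrm{c}}^{\gamma}\leq\delta_{\mathrm{c}}$, which is immediate from (\ref{plePP7}). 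The real work — and the main obstacle — is to prove that $\delta_{\mathrm{c}}=\sup_{\gamma>-2\phi(A)}\delta_{\mathrm{c}}^{\gamma}$ is \emph{finite}, and this is exactly where the hypothesis that all eigenvalues of $A$ coincide and are real is used. Writing $A=\phi(A)I_{n}+N$ with $N$ nilpotent and $\mu:=\gamma+2\phi(A)>0$, so $M=N+\frac{1}{2}\mu I_{n}$, I would exploit the homogeneity of the nilpotent Lyapunov equations under the diagonal scaling $T_{\mu}$ adapted to the Jordan structure of $N$ (for which $T_{\mu}^{-1}NT_{\mu}=\mu N$): this makes $W(\gamma)$, $U(\gamma)$ and hence $\delta_{\mathrm{c}}^{\gamma}$ depend on $\mu$ in a scale-controlled fashion, and combined with continuity of $\delta_{\mathrm{c}}^{\gamma}$ on $(-2\phi(A),\infty)$ and the existence of finite limits as $\mu\to0^{+}$ and $\mu\to\infty$ this gives $\delta_{\mathrm{c}}<\infty$. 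The delicate point in this last step is handling the possible rank deficiency of $BB^{\mathrm{T}}$, which forces the entries of $W$ to scale with several different powers of $\mu$.
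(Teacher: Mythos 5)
Your Parts 1 and 2 are correct and essentially complete; the paper does not prove these itself (it cites \cite{zhou14book}), and your derivation of (\ref{plePP6}) by squaring the identity $BB^{\mathrm{T}}P=A+\gamma I_{n}+WA^{\mathrm{T}}W^{-1}$ and using $\mathrm{tr}((B^{\mathrm{T}}PB)^{2})\leq \mathrm{tr}^{2}(B^{\mathrm{T}}PB)$ checks out. The only part the paper actually proves (Appendix A1) is the third bullet, and that is precisely where your proposal has a genuine gap. You correctly reduce (\ref{plePP3}) to the finiteness of $\delta_{\mathrm{c}}=\sup_{\gamma>-2\phi(A)}\delta_{\mathrm{c}}^{\gamma}$, but your argument for finiteness is only a sketch. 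Under the Jordan scaling $T_{\mu}$ with $T_{\mu}^{-1}NT_{\mu}=\mu N$, the equation for $\tilde{W}=T_{\mu}^{-1}WT_{\mu}^{-\mathrm{T}}$ acquires the right-hand side $\mu^{-1}T_{\mu}^{-1}BB^{\mathrm{T}}T_{\mu}^{-\mathrm{T}}$, whose entries scale with different (and negative) powers of $\mu$, so the claimed "existence of finite limits as $\mu \to0^{+}$" of $\delta_{\mathrm{c}}^{\gamma}$ is exactly the nontrivial assertion, not a consequence of continuity. Indeed, for the Lyapunov solution operator $\mathcal{L}$ of a fixed anti-stable matrix, $\lambda_{\max}(S^{-1}\mathcal{L}(S))$ is \emph{not} bounded uniformly over degenerating $S>0$ (a $2\times2$ example with $S=\mathrm{diag}(1,\epsilon^{2})$ already blows up like $\epsilon^{-2}$), so the structure of $BB^{\mathrm{T}}$ must be used in an essential way. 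You flag this as "the delicate point," but flagging it is not resolving it.

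The paper closes this step by a different mechanism, which you may want to compare. Differentiating (\ref{2}) and conjugating by $W^{-1/2}$ gives $-I_{n}=A_{2}(\pi(\gamma)E)+(\pi(\gamma)E)A_{2}^{\mathrm{T}}$, where $E=-W^{-1/2}(\mathrm{d}W/\mathrm{d}\gamma)W^{-1/2}=W^{1/2}(\mathrm{d}P/\mathrm{d}\gamma)W^{1/2}/1$ up to the congruence you already identified, and $A_{2}=-(W^{-1/2}AW^{1/2}+\tfrac{1}{2}\gamma I_{n})/\pi(\gamma)$. Vectorizing yields $\mathrm{vec}(\pi(\gamma)E)=-\mathit{\Phi}^{-1}\mathrm{vec}(I_{n})$ with $\mathit{\Phi}=I_{n}\otimes A_{2}+A_{2}\otimes I_{n}$, and $\Vert\mathit{\Phi}^{-1}\Vert=\Vert\mathit{\Phi}^{\ast}\Vert/\vert\det\mathit{\Phi}\vert$ is bounded uniformly in $\gamma$ by combining two facts, each of which uses the hypothesis that the eigenvalues of $A$ are identical and real: first, $\mathrm{tr}^{2}(A)=n\,\mathrm{tr}(A^{2})$ together with (\ref{plePP6}) gives $\Vert A_{2}\Vert^{2}\leq(2n-1)/(4n)$, which controls the adjugate $\mathit{\Phi}^{\ast}$; second, the exact computation $\vert\lambda_{i}(A_{2})+\lambda_{j}(A_{2})\vert=(\gamma+2\phi(A))/\pi(\gamma)=1/n$ bounds $\vert\det\mathit{\Phi}\vert$ away from zero. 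This determinant/adjugate argument is what replaces the scaling analysis you left open; to complete your route you would have to track the $\mu$-dependence of $W$ and $U$ entrywise and prove the limits exist, which is considerably more work.
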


The first two properties in Lemma \ref{lemma1} are referenced from
\cite{zhou14book}. The proof for the final property in Lemma \ref{lemma1} will
be provided in Appendix A1.

One may wonder whether (\ref{plePP3}), which is the key property for solving
the problem in this paper, is also satisfied in cases where the eigenvalues of
$A$ are not identical. We use the following example to show that this is not true.

\begin{example}
Consider a special system with%
\[
A=\left[
\begin{array}
[c]{cc}%
0 & 1\\
0 & \alpha
\end{array}
\right]  ,\ B=\left[
\begin{array}
[c]{c}%
0\\
1
\end{array}
\right]  ,
\]
in which $\alpha \geq0$ is a constant. It is evident that $\phi(A)=0$.
Furthermore, $\lambda(A)=\{0\}$ when $\alpha=0$ and $\lambda(A)=\{0,\alpha \}$
when $\alpha>0$. By (\ref{ple}), it can be computed that%
\[
P(\gamma)=\left[
\begin{array}
[c]{cc}%
\gamma(\gamma+\alpha)^{2} & \gamma(\gamma+\alpha)\\
\gamma(\gamma+\alpha) & 2(\gamma+\alpha)
\end{array}
\right]  ,\ \frac{\mathrm{d}P(\gamma)}{\mathrm{d}\gamma}=\left[
\begin{array}
[c]{cc}%
(3\gamma+\alpha)(\gamma+\alpha) & 2\gamma+\alpha \\
2\gamma+\alpha & 2
\end{array}
\right]  .
\]
Furthermore, by (\ref{plePP8}), or equivalently%
\[
\delta_{\mathrm{c}}=\sup_{\gamma>-2\phi(A)}\{ \delta_{\mathrm{c}}^{\gamma
}\},\delta_{\mathrm{c}}^{\gamma}=\pi(\gamma)\lambda_{\max}\left(  P^{-\frac
{1}{2}}\frac{\mathrm{d}P(\gamma)}{\mathrm{d}\gamma}P^{-\frac{1}{2}}\right)  ,
\]
we deduce that%
\[
\delta_{\mathrm{c}}=\sup_{\gamma>0}\left \{  \frac{2(\sqrt{\varsigma(\gamma
)}+\alpha+\gamma)\sqrt{\varsigma(\gamma)}}{\gamma(\gamma+2\alpha)}\right \}  ,
\]
where $\varsigma(\gamma)=\alpha^{2}+4\alpha \gamma+2\gamma^{2}$. Clearly, if
$\alpha=0$, then $\delta_{\mathrm{c}}=2(2+\sqrt{2})$ which is bounded, while
if $\alpha>0$ then $\delta_{\mathrm{c}}\rightarrow \infty$ as $\gamma
\rightarrow0^{+}$.
\end{example}

As an extension of Lemma \ref{lemma1}, properties of $P(\gamma)$ when $A$
satisfies Assumption \ref{ass1} are detailed below. These properties will
serve as fundamental elements extensively utilized in this paper.

\begin{lemma}
\label{lemma2}Assume that $(A,B)$ satisfies Assumption \ref{ass1}. Then, for
any $\gamma>0,$ the PLE (\ref{ple}) has a unique positive definite solution
$P(\gamma)$. In addition, the unique solution $P(\gamma)$ possesses the
following properties%
\begin{align}
\mathrm{tr}(B^{\mathrm{T}}P(\gamma)B)  &  =n\gamma,\label{pleP1}\\
P(\gamma)BB^{\mathrm{T}}P(\gamma)  &  \leq n\gamma P(\gamma),\label{pleP2}\\
A^{\mathrm{T}}P(\gamma)A  &  \leq3n^{2}\gamma^{2}P(\gamma) ,\label{pleP3}\\
\frac{\mathrm{d}P(\gamma)}{\mathrm{d}\gamma}BB^{\mathrm{T}}\frac
{\mathrm{d}P(\gamma)}{\mathrm{d}\gamma}  &  \leq n\frac{\mathrm{d}P(\gamma
)}{\mathrm{d}\gamma},\label{pleP4}\\
\frac{P(\gamma)}{n\gamma}\leq \frac{\mathrm{d}P(\gamma)}{\mathrm{d}\gamma}  &
\leq \frac{\delta_{\mathrm{c}}P(\gamma)}{n\gamma},\  \forall \gamma>0,
\label{pleP5}%
\end{align}
where $\delta_{\mathrm{c}}$ is a constant independent of $\gamma$. Finally,
let $\gamma_{0}$ be a given constant and $\delta \triangleq2\alpha( A) -1\geq
1$. Then, there holds
\begin{equation}
\mu_{1}\gamma^{\delta}I_{n}\leq P(\gamma)\leq \mu_{2}\gamma I_{n},\  \gamma
\in(0,\gamma_{0}], \label{pleP6}%
\end{equation}
where $\mu_{1}=\mu_{1}(A,B) $ is a constant depending on $(A,B) $ and $\mu
_{2}=\mu_{2}(\gamma_{0}) $ is a constant depending on $\gamma_{0}$.
\end{lemma}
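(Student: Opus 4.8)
The plan is to read each of (\ref{pleP1})--(\ref{pleP5}) off from the general properties collected in Lemma~\ref{lemma1}, once we record that Assumption~\ref{ass1} forces $A$ to be nilpotent: hence $\phi(A)=0$, $\mathrm{tr}(A)=\mathrm{tr}(A^{2})=0$, and therefore $\pi(\gamma)=2\mathrm{tr}(A)+n\gamma=n\gamma$. Existence and uniqueness of $P(\gamma)>0$ for every $\gamma>0$ is then exactly the first item of Lemma~\ref{lemma1} (since $\gamma>0=-2\phi(A)$), and the identity $\mathrm{tr}(B^{\mathrm{T}}PB)=\pi(\gamma)$ from its second item becomes $\mathrm{tr}(B^{\mathrm{T}}PB)=n\gamma$, which is (\ref{pleP1}).

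For (\ref{pleP2})--(\ref{pleP4}) I would use one elementary observation repeatedly: $\lambda_{\max}(S)\le\mathrm{tr}(S)$ whenever $S\ge0$, together with the fact that $NN^{\mathrm{T}}$ and $N^{\mathrm{T}}N$ have the same nonzero eigenvalues. With $N=B^{\mathrm{T}}P^{1/2}$ this gives $\lambda_{\max}(P^{1/2}BB^{\mathrm{T}}P^{1/2})=\lambda_{\max}(B^{\mathrm{T}}PB)\le\mathrm{tr}(B^{\mathrm{T}}PB)=n\gamma$, and multiplying by $P^{1/2}$ on both sides yields (\ref{pleP2}). Differentiating (\ref{pleP1}) in $\gamma$ gives $\mathrm{tr}(B^{\mathrm{T}}\tfrac{\mathrm{d}P}{\mathrm{d}\gamma}B)=n$ (legitimate since $\tfrac{\mathrm{d}P}{\mathrm{d}\gamma}>0$ by Lemma~\ref{lemma1}), and the same argument with $P$ replaced by $\tfrac{\mathrm{d}P}{\mathrm{d}\gamma}$ gives (\ref{pleP4}). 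For (\ref{pleP3}) I would start from the trace estimate in (\ref{plePP6}), which under $\mathrm{tr}(A)=\mathrm{tr}(A^{2})=0$ and $\pi(\gamma)=n\gamma$ collapses to $\mathrm{tr}(PAP^{-1}A^{\mathrm{T}})\le\tfrac{n-1}{2n}(n\gamma)^{2}=\tfrac{n(n-1)}{2}\gamma^{2}\le3n^{2}\gamma^{2}$; writing $\mathrm{tr}(PAP^{-1}A^{\mathrm{T}})=\mathrm{tr}(Q^{\mathrm{T}}Q)$ with $Q=P^{1/2}AP^{-1/2}$ gives $\lambda_{\max}(P^{-1/2}A^{\mathrm{T}}PAP^{-1/2})=\lambda_{\max}(Q^{\mathrm{T}}Q)\le\mathrm{tr}(Q^{\mathrm{T}}Q)\le3n^{2}\gamma^{2}$, and again multiplying by $P^{1/2}$ gives (\ref{pleP3}).

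The upper bound in (\ref{pleP5}) is just (\ref{plePP3}) of Lemma~\ref{lemma1}, whose hypothesis (all eigenvalues of $A$ identical and purely real) holds trivially here, so $\tfrac{\mathrm{d}P}{\mathrm{d}\gamma}\le\tfrac{\delta_{\mathrm{c}}P}{\pi(\gamma)}=\tfrac{\delta_{\mathrm{c}}P}{n\gamma}$. For the lower bound $\tfrac{\mathrm{d}P}{\mathrm{d}\gamma}\ge\tfrac{P}{n\gamma}$ I would put $M\triangleq A-BB^{\mathrm{T}}P+\tfrac{\gamma}{2}I_{n}$ and first check that $M$ is Hurwitz: the PLE gives $M^{\mathrm{T}}P+PM=-PBB^{\mathrm{T}}P\le0$, so $\mathrm{Re}(\lambda)\le0$ for every eigenvalue $\lambda$ of $M$, and if some eigenvalue $\lambda$ had $\mathrm{Re}(\lambda)=0$ with eigenvector $v$, the eigenvector identity would force $B^{\mathrm{T}}Pv=0$ and then $Mv=Av+\tfrac{\gamma}{2}v=\lambda v$, i.e. $Av=(\lambda-\tfrac{\gamma}{2})v$, which is impossible because $\lambda(A)=\{0\}$ and $\gamma>0$. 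Differentiating the PLE gives $M^{\mathrm{T}}\tfrac{\mathrm{d}P}{\mathrm{d}\gamma}+\tfrac{\mathrm{d}P}{\mathrm{d}\gamma}M=-P$, so $Z\triangleq n\gamma\tfrac{\mathrm{d}P}{\mathrm{d}\gamma}-P$ obeys $M^{\mathrm{T}}Z+ZM=-n\gamma P+PBB^{\mathrm{T}}P\le0$ by (\ref{pleP2}); since $M$ is Hurwitz this yields $Z\ge0$, which is (\ref{pleP5}).

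Finally, for (\ref{pleP6}) I would exploit that nilpotency of $A$ makes $e^{-As}$ a matrix polynomial in $s$ of degree $\alpha(A)-1$, so $W(\gamma)=P^{-1}(\gamma)=\int_{0}^{\infty}e^{-\gamma s}e^{-As}BB^{\mathrm{T}}e^{-A^{\mathrm{T}}s}\,\mathrm{d}s=\sum_{j=0}^{2(\alpha(A)-1)}\tfrac{j!}{\gamma^{j+1}}M_{j}$ with constant matrices $M_{j}$, the most singular term as $\gamma\to0^{+}$ being of order $\gamma^{-(2\alpha(A)-1)}=\gamma^{-\delta}$; hence $\|W(\gamma)\|\le C\gamma^{-\delta}$ on $(0,\gamma_{0}]$, and inverting gives $P(\gamma)\ge\mu_{1}\gamma^{\delta}I_{n}$. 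For $P(\gamma)\le\mu_{2}\gamma I_{n}$ I would bound $W(\gamma)\ge e^{-1}\int_{0}^{1/\gamma}e^{-As}BB^{\mathrm{T}}e^{-A^{\mathrm{T}}s}\,\mathrm{d}s$ and show that this finite-horizon controllability Gramian $G_{T}\triangleq\int_{0}^{T}e^{-As}BB^{\mathrm{T}}e^{-A^{\mathrm{T}}s}\,\mathrm{d}s$ of $(-A,B)$ satisfies $G_{T}\ge cTI_{n}$ for $T$ large: expanding $B^{\mathrm{T}}e^{-A^{\mathrm{T}}s}v$ in powers of $s$ and scaling $T$ out writes $v^{\mathrm{T}}G_{T}v$ as $T$ times a quadratic form in the scaled coefficient vector with a Hilbert-type positive definite kernel, and a compactness argument on $\{\|v\|=1\}$ -- using that controllability of $(A,B)$ means no nonzero $v$ kills every $B^{\mathrm{T}}(A^{\mathrm{T}})^{k}v$ -- makes the lower bound uniform in $v$. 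I expect this last uniform Gramian estimate to be the main obstacle, because the ``controllability index'' of a direction $v$ varies with $v$ (so no single leading coefficient can be used), forcing the scaling to be combined with compactness; identifying the exact exponent $\delta=2\alpha(A)-1$ also relies on the fact that $A^{\alpha(A)-1}B\neq0$ under controllability.
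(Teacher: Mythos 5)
Your proposal is correct, but it is worth noting that the paper itself proves almost none of Lemma~\ref{lemma2}: equation (\ref{pleP1}), inequalities (\ref{pleP2}), (\ref{pleP3}), (\ref{pleP6}) and the first half of (\ref{pleP5}) are simply cited from \cite{zhou14book} and \cite{zhou20auto}, the second half of (\ref{pleP5}) is deferred to (\ref{plePP3}) exactly as you do, and the only part proved in the paper is (\ref{pleP4}) (Appendix A2), where your argument --- sandwiching $BB^{\mathrm{T}}$ between square roots of $\mathrm{d}P/\mathrm{d}\gamma$ and bounding by the trace $\mathrm{tr}(B^{\mathrm{T}}(\mathrm{d}P/\mathrm{d}\gamma)B)=n$ --- coincides with theirs verbatim. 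For the cited parts you supply complete and sound derivations: the reduction $\pi(\gamma)=n\gamma$ from nilpotency, the $\lambda_{\max}\leq\mathrm{tr}$ device for (\ref{pleP2}) and (\ref{pleP3}) (which in fact yields the sharper constant $n(n-1)/2$ in place of $3n^{2}$), the comparison argument for the lower bound in (\ref{pleP5}) via the Hurwitz closed-loop matrix $M=A-BB^{\mathrm{T}}P+\tfrac{\gamma}{2}I_{n}$ and the Lyapunov equation $M^{\mathrm{T}}Z+ZM\leq0$ for $Z=n\gamma\,\mathrm{d}P/\mathrm{d}\gamma-P$, and the integral representation $W(\gamma)=\int_{0}^{\infty}e^{-\gamma s}e^{-As}BB^{\mathrm{T}}e^{-A^{\mathrm{T}}s}\mathrm{d}s$ for (\ref{pleP6}); I checked that your scaling-plus-Hilbert-matrix-plus-compactness route to the uniform Gramian bound $G_{T}\geq cTI_{n}$ does close, since $v^{\mathrm{T}}G_{T}v=T\sum_{k,l}c_{k}^{\mathrm{T}}c_{l}/(k+l+1)$ with the positive definite Hilbert kernel and $\sum_{k}\|c_{k}\|^{2}\geq\min_{\|v\|=1}\sum_{k}\|B^{\mathrm{T}}(A^{\mathrm{T}})^{k}v\|^{2}/(k!)^{2}>0$ by controllability. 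The only caveat is cosmetic: your constant in $\|W(\gamma)\|\leq C\gamma^{-\delta}$ (and hence $\mu_{1}$) necessarily depends on $\gamma_{0}$ as well as on $(A,B)$ when $\gamma_{0}>1$, which reflects an imprecision already present in the lemma's statement rather than a defect of your argument.
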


Equation (\ref{pleP1}), inequalities (\ref{pleP2}), (\ref{pleP3}),
(\ref{pleP6}), and the first inequality in (\ref{pleP5}) are referenced from
\cite{zhou14book} and \cite{zhou20auto}. The proof for (\ref{pleP4}) will be
provided in Appendix A2. The second inequality in (\ref{pleP5}) can be viewed
as a special case of (\ref{plePP3}) and its proof is omitted.


\section{State Feedback Stabilization}

To proceed, we impose the following assumption instead of Assumption
\ref{ass2}.

\begin{assumption}
\label{ass3}There exists two unknown constants $\bar{\tau}>0$ and $d\in
\lbrack0,1)$ such that%
\[
0\leq \tau_{i}(t)\leq \bar{\tau},\quad \dot{\tau}_{i}(t)\leq d,\quad \forall
t\geq0,\quad i\in \mathbf{I}_{1}^{q}.
\]

\end{assumption}

We can present the key finding in this section.

\begin{theorem}
\label{thm1} Let Assumptions \ref{ass1} and \ref{ass3} be satisfied. Then
system (\ref{sys}) is asymptotically stabilized by the controller%
\begin{align}
u(t)  &  =-{B^{\mathrm{T}}P(\gamma(t))}x(t) ,\label{eq3.3}\\
\gamma(t)  &  =\frac{\gamma_{0}}{(\omega t+1)^{\mu}},\quad t\in[ 0,\infty) ,
\label{eqgammat}%
\end{align}
where $P(\gamma)$ is the unique positive definite solution to the PLE
(\ref{ple}), and $\gamma_{0}>0,\omega>0,\mu \in(0,1)$ are constants.
\end{theorem}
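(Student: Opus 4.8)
The plan is to construct a time-varying Lyapunov-Krasovskii-like functional of the form
\[
V(t)=x^{\mathrm{T}}(t)P(\gamma(t))x(t)+\sum_{i=1}^{q}\kappa_{i}(t)\int_{t-\tau_{i}(t)}^{t}e^{\lambda(s-t)}\,u^{\mathrm{T}}(s)R\,u(s)\,\mathrm{d}s,
\]
where $R$ is an appropriate weighting matrix (essentially a multiple of $I_m$), $\lambda>0$ is a fixed decay rate, and the coefficients $\kappa_{i}(t)$ will be chosen (possibly proportional to powers of $\gamma(t)$) so that the cross terms can be dominated. The first step is to differentiate $x^{\mathrm{T}}(t)P(\gamma(t))x(t)$ along the closed-loop trajectories: using the PLE (\ref{ple}) this produces the "good" negative term $-\gamma(t)x^{\mathrm{T}}Px-x^{\mathrm{T}}PBB^{\mathrm{T}}Px$ together with (a) a term $\dot\gamma(t)x^{\mathrm{T}}\frac{\mathrm{d}P}{\mathrm{d}\gamma}x$ coming from the time dependence of $\gamma$, and (b) cross terms of the form $2x^{\mathrm{T}}PB_i(u(t-\tau_i(t))-u(t))$ which, after writing $u(t-\tau_i)-u(t)=-\int_{t-\tau_i(t)}^t \dot u(s)\,\mathrm{d}s$, must be bounded.

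The second step is to control $\dot u(s)$. Since $u(s)=-B^{\mathrm{T}}P(\gamma(s))x(s)$, we have $\dot u(s)=-B^{\mathrm{T}}\frac{\mathrm{d}P}{\mathrm{d}\gamma}(\gamma(s))\dot\gamma(s)x(s)-B^{\mathrm{T}}P(\gamma(s))\dot x(s)$, and $\dot x(s)=Ax(s)+\sum_j B_j u(s-\tau_j(s))$. Here the properties collected in Lemma \ref{lemma2} are exactly what is needed: (\ref{pleP2}) and (\ref{pleP3}) let me bound $\|B^{\mathrm{T}}Px\|^2$ and $\|B^{\mathrm{T}}PAx\|$ in terms of $\gamma$ and $x^{\mathrm{T}}Px$; (\ref{pleP4}) and (\ref{pleP5}) let me bound the $\dot\gamma$-contribution since $\frac{\mathrm{d}P}{\mathrm{d}\gamma}\le\frac{\delta_{\mathrm c}}{n\gamma}P$ and $\dot\gamma/\gamma=-\mu\omega/(\omega t+1)$ is itself small and summable. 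The choice (\ref{eqgammat}) with $\mu\in(0,1)$ is designed precisely so that $\dot\gamma(t)$ is integrable-in-a-suitable-sense relative to $\gamma(t)$ while $\gamma(t)\to 0$ slowly; the lower bound in (\ref{pleP5}), $\frac{\mathrm{d}P}{\mathrm{d}\gamma}\ge\frac{P}{n\gamma}$, together with $\dot\gamma<0$ means the term (a) has a definite (bad) sign, so it must be absorbed, and the key quantitative fact will be that $|\dot\gamma(t)|/\gamma(t)\to 0$, so for $t$ large this bad term is dominated by the good term $-\gamma(t)x^{\mathrm{T}}Px$.

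The third step is the Lyapunov-Krasovskii bookkeeping: differentiating the integral terms gives $\dot\kappa_i\int(\cdots)+\kappa_i u^{\mathrm{T}}(t)Ru(t)-\kappa_i(1-\dot\tau_i(t))e^{-\lambda\tau_i(t)}u^{\mathrm{T}}(t-\tau_i)Ru(t-\tau_i)-\lambda\kappa_i\int(\cdots)$, where Assumption \ref{ass3} ($\dot\tau_i\le d<1$) guarantees the coefficient $1-\dot\tau_i(t)\ge 1-d>0$ so the negative memory term is genuinely negative, and $\bar\tau$ gives $e^{-\lambda\tau_i}\ge e^{-\lambda\bar\tau}>0$. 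The plan is to show that for $t$ beyond some $T^*$ (determined implicitly by when $\gamma(t)$ has decreased enough — this is the analogue of "$\gamma$ eventually enters the admissible range $(0,\gamma^*]$" from Lemma \ref{lemma0}, except now no knowledge of $\bar\tau$ is needed because $\gamma(t)\to 0$), all the cross terms and $\dot\gamma$-terms are absorbed and $\dot V(t)\le -c\,\gamma(t)V(t)$ for some $c>0$, or at least $\dot V(t)\le 0$. On $[0,T^*]$ one only needs boundedness of $V$, which follows from continuity and the fact that everything is finite. The final step is to pass from $\dot V\le -c\gamma(t)V$ with $\int_0^\infty\gamma(t)\,\mathrm{d}t=\int_0^\infty\gamma_0(\omega t+1)^{-\mu}\,\mathrm{d}t=\infty$ (since $\mu<1$) to conclude $V(t)\to 0$, hence $x(t)\to 0$; and since $\|u(t)\|\le\|B^{\mathrm{T}}P(\gamma(t))\|\,\|x(t)\|$ with $P(\gamma(t))$ bounded via (\ref{pleP6}), also $u(t)\to 0$, giving asymptotic stability. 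The use of (\ref{pleP6}), $\mu_1\gamma^\delta I_n\le P(\gamma)\le\mu_2\gamma I_n$, is needed both to relate $\|x\|$ to $x^{\mathrm{T}}Px$ (with the $\gamma^\delta$ factor this is where one must be careful, since $P$ degenerates as $\gamma\to 0$) and to control the memory terms.

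\textbf{Main obstacle.} The hard part will be the interplay between the vanishing of $\gamma(t)$ and the degeneration of $P(\gamma(t))$: as $\gamma(t)\to 0$, the lower bound $P(\gamma)\ge\mu_1\gamma^\delta I_n$ deteriorates (with $\delta=2\alpha(A)-1$ possibly large), so $V(t)\to 0$ does not immediately give a useful rate for $x(t)$, and more importantly the "good" damping term $-\gamma(t)x^{\mathrm{T}}Px$ is itself shrinking, so one must verify the competition $\int_0^\infty\gamma(t)\,\mathrm{d}t=\infty$ wins against all the perturbation terms — which requires the cross terms (after integrating $\dot u$) to be not merely bounded by the good term but by a summable fraction of it, uniformly once $\gamma(t)$ is small enough. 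Making the choice of the weights $\kappa_i(t)$, the decay rate $\lambda$, and the threshold $T^*$ consistent, and checking that the $\bar\tau$- and $d$-dependence only enters through harmless multiplicative constants (so that no knowledge of $\bar\tau$, $d$ is needed in the controller itself), is the crux of the argument.
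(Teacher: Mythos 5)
Your plan is essentially the paper's proof: the same Lyapunov function $x^{\mathrm{T}}P(\gamma(t))x$, the same rewriting of the delay mismatch as $-\int_{t-\tau_i}^{t}\dot{u}(s)\,\mathrm{d}s$ (the paper integrates $\dot{\psi}$ with $\psi=P x$, which differs from $\dot u=-B^{\mathrm{T}}\dot\psi$ only by a constant matrix), the same use of (\ref{pleP2})--(\ref{pleP6}) to bound everything by $\gamma^{3}(s)x^{\mathrm{T}}(s)P(s)x(s)$, the same augmentation by an integral functional, the same threshold time after which $\gamma(t)$ is small enough, and the same conclusion from $\int_{0}^{\infty}\gamma\,\mathrm{d}t=\infty$. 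Two points need correction, one of which is the actual crux.

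First, a sign error: since $\dot{\gamma}<0$ and $\mathrm{d}P/\mathrm{d}\gamma>0$, the term $\dot{\gamma}\,x^{\mathrm{T}}\frac{\mathrm{d}P}{\mathrm{d}\gamma}x$ appearing in $\dot{V}_{0}$ is \emph{negative}, i.e.\ favorable; the paper simply discards it in (\ref{eq3.8}). It is not a bad term to be absorbed. The place where $\dot{\gamma}$ genuinely hurts is inside the integrated $\dot{u}(s)$, where one needs $\dot{\gamma}^{2}\leq k^{2}\gamma_{0}^{2/\mu-2}\gamma^{4}$ (a consequence of $\dot\gamma=-k\gamma^{1+1/\mu}$ with $\mu<1$) together with the upper bound in (\ref{pleP5}); your plan does gesture at this correctly. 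Second, and more importantly, you flag but do not resolve the step from $V(t)\to 0$ to $x(t)\to 0$ despite $P(\gamma(t))\geq\mu_{1}\gamma^{\delta}(t)I_{n}$ degenerating. This is not optional bookkeeping — it is the reason the specific form (\ref{eqgammat}) works. The resolution is quantitative: the comparison principle gives $V(t)\leq f(t_{1})V(t_{1})/f(t)$ with $f(t)=\exp\bigl(\gamma_{0}(\omega t+1)^{1-\mu}/(4\omega(1-\mu))\bigr)$, a stretched exponential, while the degeneration of $P$ only costs the polynomial factor $(\omega t+1)^{\delta\mu}$; since $\mu<1$ the stretched exponential wins, giving (\ref{eq3.26}) and hence attractivity. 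Without carrying out this rate comparison, "$\dot V\leq-c\gamma V$ and $\int\gamma=\infty$" proves only $V\to 0$, which by itself is insufficient. Two smaller repairs: your integral terms reach back only $\tau_{i}(t)\leq\bar\tau$, but after expanding $\dot{u}(s)$ you encounter states at times $\phi_{j}(s)\geq t-2\bar\tau$, so the functional must integrate over $[t-2\bar\tau,t]$ (with the change of variables using $1-\dot\tau_{j}\geq 1-d$); and Lyapunov stability (as opposed to mere attractivity) requires an a priori bound linear in the initial data on $[0,t_{1}]$, as in (\ref{eqadd1}) — continuity of $V$ alone does not give the $\varepsilon$--$\delta$ statement.
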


\begin{proof}
It can be verified that $\gamma(t)$ satisfies the following differential
equation
\begin{equation}
\dot{\gamma}(t)=-k\gamma^{1+1/\mu}(t),\  \gamma(0)=\gamma_{0},\ t\in[ 0,\infty)
, \label{eq3.2}%
\end{equation}
where $k=\omega \mu/\gamma_{0}^{1/\mu}>0$ is a constant. It follows that
$\dot{\gamma}(t)<0$ and $\gamma(t)\leq \gamma_{0},\forall t\geq0$. Going
forward, unless explicitly stated, we will omit the explicit dependence of
variables on $t$. For instance, $x$ represents $x(t)$. Furthermore, we denote
$P(t)=P(\gamma(t)) $.

Denote $\phi_{i}(t)=t-\tau_{i}(t),i\in \mathbf{I}_{1}^{q}$ and%
\begin{equation}
\mathit{\Delta}(t)=\sum \limits_{i=1}^{q}\mathit{\Delta}_{i}%
(t),\  \mathit{\Delta}_{i}(t)=B_{i}B^{\mathrm{T}}(P(t)x(t)-P(\phi_{i}%
)x(\phi_{i})). \label{eq3.5}%
\end{equation}
Then the closed-loop system consisting of (\ref{sys}) and (\ref{eq3.3}) can be
written as%
\begin{align}
\dot{x}(t)=  &  Ax(t)-\sum \limits_{i=1}^{q}B_{i}B^{\mathrm{T}}P(\phi
_{i})x(\phi_{i})\nonumber \\
=  &  (A-BB^{\mathrm{T}}P(t))x(t)+BB^{\mathrm{T}}P(t)x(t)-\sum \limits_{i=1}%
^{q}B_{i}B^{\mathrm{T}}P(\phi_{i})x(\phi_{i})\nonumber \\
=  &  (A-BB^{\mathrm{T}}P(t))x(t)+\sum \limits_{i=1}^{q}\mathit{\Delta}%
_{i}(t)\nonumber \\
=  &  (A-BB^{\mathrm{T}}P(t))x(t)+\mathit{\Delta}(t),\  \forall t\geq \bar{\tau
}. \label{eq3.6}%
\end{align}

Consider the following Lyapunov-like function%
\begin{equation}
V_{0}(t)=V_{0}(t,x(t))=x^{\mathrm{T}}(t)P(t)x(t), \label{eq3.7}%
\end{equation}
whose time-derivative along the trajectories of system (\ref{eq3.6}) can be
evaluated as
\begin{align}
\dot{V}_{0}(t)=  &  \dot{x}^{\mathrm{T}}P(t)x+x^{\mathrm{T}}\dot
{P}(t)x+x^{\mathrm{T}}P(t)\dot{x}\nonumber \\
=  &  x^{\mathrm{T}}(A^{\mathrm{T}}P(t)+P(t)A-2P(t)BB^{\mathrm{T}}%
P(t))x+\dot{\gamma}x^{\mathrm{T}}\frac{\partial P(t)}{\partial \gamma
}x+2\mathit{\Delta}^{\mathrm{T}}P(t)x\nonumber \\
\leq &  -\gamma x^{\mathrm{T}}P(t)x+\dot{\gamma}x^{\mathrm{T}}\frac{\partial
P(t)}{\partial \gamma}x+2\mathit{\Delta}^{\mathrm{T}}P(t)x\nonumber \\
\leq &  -\gamma x^{\mathrm{T}}P(t)x+\frac{\gamma}{2}x^{\mathrm{T}}%
P(t)x+\frac{2}{\gamma}\mathit{\Delta}^{\mathrm{T}}P(t)\mathit{\Delta
}\nonumber \\
\leq &  -\frac{\gamma}{2}x^{\mathrm{T}}P(t)x+\frac{2}{\gamma}\mathit{\Delta
}^{\mathrm{T}}P(t)\mathit{\Delta}, \label{eq3.8}%
\end{align}
where the first inequality holds due to (\ref{ple}), the second inequality
holds due to the Young's inequality and (\ref{pleP5}).

Let $\psi(t)=P(t)x(t)$. Then we have%
\begin{align*}
\dot{\psi}(t)  &  =P(t)\dot{x}+\dot{P}(t)x\\
&  =P(t)\left(  Ax-\sum \limits_{i=1}^{q}B_{i}B^{\mathrm{T}}P(\phi_{i}%
)x(\phi_{i})\right)  +\dot{\gamma}\frac{\partial P(t)}{\partial \gamma}x\\
&  =\left(  P(t)A+\dot{\gamma}\frac{\partial P(t)}{\partial \gamma}\right)
x-P(t)\sum \limits_{j=1}^{q}B_{j}B^{\mathrm{T}}P(\phi_{j})x(\phi_{j}).
\end{align*}
Integrating both sides of the above equation from $t-\tau_{i}(t)$ to $t$ and
then multiplying both sides from the left by $B_{i}B^{\mathrm{T}}$ gives, for
all $i\in \mathbf{I}_{1}^{q},$%
\begin{align}
\mathit{\Delta}_{i}(t)=  &  B_{i}B^{\mathrm{T}}(\psi(t)-\psi(\phi
_{i}))
=     B_{i}B^{\mathrm{T}}\int_{t-\tau_{i}}^{t}(\xi_{1}(s)-P(s)\xi
_{2}(s))\mathrm{d}s,\  \forall t\geq2\bar{\tau}, \label{eq3.9}%
\end{align}
where
\[
\xi_{1}(t)=\left(  P(t)A+\dot{\gamma}(t)\frac{\partial P(t)}{\partial \gamma
}\right)  x,\ \xi_{2}(t)=\sum \limits_{j=1}^{q}B_{j}B^{\mathrm{T}}P(\phi
_{j})x(\phi_{j}).
\]
By using the above relation (\ref{eq3.9}), together with (\ref{eq3.5}), we
deduce that
\begin{align}
\mathit{\Delta}^{\mathrm{T}}P(t)\mathit{\Delta}\leq &  \mu_{2}\gamma \left(
\sum \limits_{i=1}^{q}\mathit{\Delta}_{i}\right)  ^{\mathrm{T}}\left(
\sum \limits_{i=1}^{q}\mathit{\Delta}_{i}\right) \nonumber \\
\leq &  q\mu_{2}\gamma \sum \limits_{i=1}^{q}\mathit{\Delta}_{i}^{\mathrm{T}%
}\mathit{\Delta}_{i}\nonumber \\
\leq &  q\mu_{2}\gamma \sum \limits_{i=1}^{q}\left(  (1+k_{1})\beta_{1i}+\left(
1+\frac{1}{k_{1}}\right)  \beta_{2i}\right)  , \label{eq3.10}%
\end{align}
where the first inequality follows from (\ref{pleP6}), the second inequality
follows from the Jensen's inequality, and the final inequality is a
consequence of the Young's inequality with $k_{1}>0$ being a constant to be
determined, and%
\begin{align*}
\beta_{1i}  &  =\left(  \int_{t-\tau_{i}}^{t}\xi_{1}(s)\mathrm{d}s\right)
^{\mathrm{T}}BB_{i}^{\mathrm{T}}B_{i}B^{\mathrm{T}}\left(  \int_{t-\tau_{i}%
}^{t}\xi_{1}(s)\mathrm{d}s\right)  ,\\
\beta_{2i}  &  =\left(  \int_{t-\tau_{i}}^{t}P(s)\xi_{2}(s)\mathrm{d}s\right)
^{\mathrm{T}}BB_{i}^{\mathrm{T}}B_{i}B^{\mathrm{T}}\left(  \int_{t-\tau_{i}%
}^{t}P(s)\xi_{2}(s)\mathrm{d}s\right)  .
\end{align*}
By using the Jensen's inequality, the Young's inequality, and the definition
\[
\mathcal{P}(s)=\left(  \frac{\partial P(s)}{\partial \gamma}\right)
^{\mathrm{T}}BB^{\mathrm{T}}\left(  \frac{\partial P(s)}{\partial \gamma
}\right)  ,
\]
it is easy to get that
\begin{align}
\beta_{1i}\leq &  \tau_{i}\int_{t-\tau_{i}}^{t}\xi_{1}^{\mathrm{T}}%
(s)BB_{i}^{\mathrm{T}}B_{i}B^{\mathrm{T}}\xi_{1}(s)\mathrm{d}s\nonumber \\
\leq &  \bar{\tau}\left \Vert B_{i}\right \Vert ^{2}\int_{t-\bar{\tau}}^{t}%
\xi_{1}^{\mathrm{T}}(s)BB^{\mathrm{T}}\xi_{1}(s)\mathrm{d}s\nonumber \\
\leq &  2\bar{\tau}\left \Vert B_{i}\right \Vert ^{2}\int_{t-\bar{\tau}}%
^{t}x^{\mathrm{T}}(s)A^{\mathrm{T}}P(s)BB^{\mathrm{T}}P(s)Ax(s)\mathrm{d}%
s+2\bar{\tau}\left \Vert B_{i}\right \Vert ^{2}\int_{t-\bar{\tau}}^{t}%
\dot{\gamma}^{2}(s)x^{\mathrm{T}}(s)\mathcal{P}(s)x(s)\mathrm{d}s\nonumber \\
\leq &  2\bar{\tau}n\left \Vert B_{i}\right \Vert ^{2}\int_{t-\bar{\tau}}%
^{t}\gamma(s)x^{\mathrm{T}}(s)A^{\mathrm{T}}P(s)Ax(s)\mathrm{d}s+2\bar{\tau
}n\left \Vert B_{i}\right \Vert ^{2}\int_{t-\bar{\tau}}^{t}\dot{\gamma}%
^{2}(s)x^{\mathrm{T}}(s)\frac{\partial P(s)}{\partial \gamma}x(s)\mathrm{d}%
s\nonumber \\
\leq &  6\bar{\tau}n^{3}\left \Vert B_{i}\right \Vert ^{2}\int_{t-\bar{\tau}%
}^{t}\gamma^{3}(s)x^{\mathrm{T}}(s)P(s)x(s)\mathrm{d}s+2\bar{\tau}%
\delta_{\mathrm{c}}\left \Vert B_{i}\right \Vert ^{2}\int_{t-\bar{\tau}}%
^{t}\frac{\dot{\gamma}^{2}(s)}{\gamma(s)}x^{\mathrm{T}}(s)P(s)x(s)\mathrm{d}s,
\label{eqbeita1}%
\end{align}
\qquad \qquad where we have used inequalities (\ref{pleP2}), (\ref{pleP3}),
(\ref{pleP4}) and (\ref{pleP5}) and the fact that $\tau_{i}\leq \bar{\tau}$. It
follows from (\ref{eq3.2}) that, for all $t\geq0$,%
\begin{align*}
\dot{\gamma}^{2}(t)=  &  k^{2}\gamma^{2+2/\mu}(t)
=    k^{2}\gamma^{4}(t)\gamma^{2/\mu-2}(t)
\leq   k^{2}\gamma_{0}^{2/\mu-2}\gamma^{4}(t),
\end{align*}
where we have noticed that $2/\mu-2>0$. Then the inequality in (\ref{eqbeita1}%
) can be further written as, for all $i\in \mathbf{I}_{1}^{q},$
\begin{align}
\beta_{1i}\leq &  6\bar{\tau}n^{3}\left \Vert B_{i}\right \Vert ^{2}\int
_{t-\bar{\tau}}^{t}\gamma^{3}(s)x^{\mathrm{T}}(s)P(s)x(s)\mathrm{d}%
s+2\bar{\tau}\delta_{\mathrm{c}}\left \Vert B_{i}\right \Vert ^{2}\int
_{t-\bar{\tau}}^{t}k^{2}\gamma_{0}^{2/\mu-2}\gamma^{3}(s)x^{\mathrm{T}%
}(s)P(s)x(s)\mathrm{d}s\nonumber \\
=  &  2\bar{\tau}\left \Vert B_{i}\right \Vert ^{2}(3n^{3}+\delta_{\mathrm{c}%
}k^{2}\gamma_{0}^{2/\mu-2})\int_{t-\bar{\tau}}^{t}\gamma^{3}(s)V_{0}%
(s)\mathrm{d}s\nonumber \\
\leq &  2\bar{\tau}\left \Vert B_{i}\right \Vert ^{2}(3n^{3}+\delta_{\mathrm{c}%
}k^{2}\gamma_{0}^{2/\mu-2})\int_{t-2\bar{\tau}}^{t}\gamma^{3}(s)V_{0}%
(s)\mathrm{d}s. \label{eq3.12}%
\end{align}

Similarly, by using the Jensen's inequality, the fact that $\tau_{i}\leq
\bar{\tau}$, and inequalities (\ref{pleP2}) and (\ref{pleP6}), we have%
\begin{align}
\beta_{2i}\leq &  \left \Vert B_{i}\right \Vert ^{2}\tau_{i}\int_{t-\tau_{i}%
}^{t}\xi_{2}^{\mathrm{T}}(s)P(s)BB^{\mathrm{T}}P(s)\xi_{2}(s)\mathrm{d}%
s\nonumber \\
\leq &  \bar{\tau}n\left \Vert B_{i}\right \Vert ^{2}\int_{t-\bar{\tau}}%
^{t}\gamma(s)\xi_{2}^{\mathrm{T}}(s)P(s)\xi_{2}(s)\mathrm{d}s\nonumber \\
\leq &  \bar{\tau}n\mu_{2}\left \Vert B_{i}\right \Vert ^{2}\int_{t-\bar{\tau}%
}^{t}\gamma^{2}(s)\left \Vert \xi_{2}(s)\right \Vert ^{2}\mathrm{d}s.
\label{eqbeita2}%
\end{align}
By using the Jensen's inequality and (\ref{pleP2}) we have%
\begin{align*}
\left \Vert \xi_{2}(s)\right \Vert ^{2}  &  \leq q\sum \limits_{j=1}%
^{q}x^{\mathrm{T}}(\phi_{j}(s))P(\phi_{j}(s))BB_{j}^{\mathrm{T}}%
B_{j}B^{\mathrm{T}}P(\phi_{j}(s))x(\phi_{j}(s))\\
&  \leq q\sum \limits_{j=1}^{q}\left \Vert B_{j}\right \Vert ^{2}x^{\mathrm{T}%
}(\phi_{j}(s))P(\phi_{j}(s))BB^{\mathrm{T}}P(\phi_{j}(s))x(\phi_{j}(s))\\
&  \leq nq\sum \limits_{j=1}^{q}\left \Vert B_{j}\right \Vert ^{2}\gamma(\phi
_{j}(s))x^{\mathrm{T}}(\phi_{j}(s))P(\phi_{j}(s))x(\phi_{j}(s))\\
&  =nq\sum \limits_{j=1}^{q}\left \Vert B_{j}\right \Vert ^{2}\gamma(\phi
_{j}(s))V_{0}(\phi_{j}(s)),
\end{align*}
substitution of which into (\ref{eqbeita2}) gives
\begin{align}
\beta_{2i}\leq &  \bar{\tau}n^{2}\mu_{2}q\left \Vert B_{i}\right \Vert ^{2}%
\int_{t-\bar{\tau}}^{t}\sum \limits_{j=1}^{q}\left \Vert B_{j}\right \Vert
^{2}\gamma^{2}(s)\gamma(\phi_{j}(s))V_{0}(\phi_{j}(s))\mathrm{d}s\nonumber \\
\leq &  \bar{\tau}n^{2}\mu_{2}q\left \Vert B_{i}\right \Vert ^{2}\int
_{t-\bar{\tau}}^{t}\sum \limits_{j=1}^{q}\left \Vert B_{j}\right \Vert ^{2}%
\gamma^{3}(\phi_{j}(s))V_{0}(\phi_{j}(s))\mathrm{d}s\nonumber \\
\leq &  \bar{\tau}n^{2}\mu_{2}q\left \Vert B_{i}\right \Vert ^{2}\sum
\limits_{j=1}^{q}\left \Vert B_{j}\right \Vert ^{2}\sum \limits_{j=1}^{q}%
\int_{t-\bar{\tau}}^{t}\gamma^{3}(\phi_{j}(s))V_{0}(\phi_{j}(s))\mathrm{d}s,
\label{eq3.14r1}%
\end{align}
where we have used the fact that $\gamma(t)\leq \gamma(\phi_{j}(t))$. By using
the denotation $\theta_{j}=s-\tau_{j}(s)=\phi_{j}(s)$ and the fact
$\mathrm{d}\theta_{j}=(1-\dot{\tau}_{j}(s))\mathrm{d}s$, we have%
\begin{align*}
\sum \limits_{j=1}^{q}\int_{t-\bar{\tau}}^{t}\gamma^{3}(\phi_{j}(s))V_{0}%
(\phi_{j}(s))\mathrm{d}s  &  =\sum \limits_{j=1}^{q}\frac{1}{1-\dot{\tau}_{j}%
}\int_{t-\bar{\tau}-\tau_{j}(t-\bar{\tau})}^{t-\tau_{j}(t)}\gamma^{3}%
(\theta_{j})V_{0}(\theta_{j})\mathrm{d}\theta_{j}\\
&  \leq \frac{1}{1-d}\sum \limits_{j=1}^{q}\int_{t-2\bar{\tau}}^{t}\gamma
^{3}(\theta_{j})V_{0}(\theta_{j})\mathrm{d}\theta_{j}\\
&  =\frac{q}{1-d}\int_{t-2\bar{\tau}}^{t}\gamma^{3}(s)V_{0}(s)\mathrm{d}s,
\end{align*}
where we have used the condition $\dot{\tau}_{i}(t)\leq d<1,\ i\in
\mathbf{I}_{1}^{q}$, as stated in Assumption \ref{ass3}. With this, the
inequality in (\ref{eq3.14r1}) can be further written as
\begin{equation}
\beta_{2i}\leq \frac{\bar{\tau}n^{2}\mu_{2}q^{2}}{1-d}\left \Vert B_{i}%
\right \Vert ^{2}\sum \limits_{j=1}^{q}\left \Vert B_{j}\right \Vert ^{2}%
\int_{t-2\bar{\tau}}^{t}\gamma^{3}(s)V_{0}(s)\mathrm{d}s. \label{eq3.15}%
\end{equation}

It is trivial to show that there is a $k_{1}>0$ such that
\begin{align*}
\frac{k_{2}}{2}\triangleq &  (1+k_{1})2\bar{\tau}\sum \limits_{i=1}%
^{q}\left \Vert B_{i}\right \Vert ^{2}(3n^{3}+\delta_{\mathrm{c}}k^{2}\gamma
_{0}^{2/\mu-2})
=     \left(  1+\frac{1}{k_{1}}\right)  \frac{\bar{\tau}n^{2}\mu_{2}q^{2}%
}{1-d}\left(  \sum \limits_{i=1}^{q}\left \Vert B_{i}\right \Vert ^{2}\right)
^{2}.
\end{align*}
Then it follows from (\ref{eq3.10}), (\ref{eq3.12}) and (\ref{eq3.15}) that%
\begin{align}
\mathit{\Delta}^{\mathrm{T}}P(t)\mathit{\Delta}\leq &  q\mu_{2}\gamma \left(
(1+k_{1})\sum \limits_{i=1}^{q}\beta_{1i}(t)+\left(  1+\frac{1}{k_{1}}\right)
\sum \limits_{i=1}^{q}\beta_{2i}(t)\right) \nonumber \\
\leq &  q\mu_{2}\gamma \left(  \frac{k_{2}}{2}\int_{t-2\bar{\tau}}^{t}%
\gamma^{3}(s)V_{0}(s)\mathrm{d}s+\frac{k_{2}}{2}\int_{t-2\bar{\tau}}^{t}%
\gamma^{3}(s)V_{0}(s)\mathrm{d}s\right) \nonumber \\
=  &  k_{2}q\mu_{2}\gamma \int_{t-2\bar{\tau}}^{t}\gamma^{3}(s)V_{0}%
(s)\mathrm{d}s. \label{eq3.16}%
\end{align}
Subsequently, substituting (\ref{eq3.16}) into (\ref{eq3.8}) gives%
\begin{equation}
\dot{V}_{0}(t)\leq-\frac{\gamma}{2}V_{0}(t)+2k_{2}q\mu_{2}\int_{t-2\bar{\tau}%
}^{t}\gamma^{3}(s)V_{0}(s)\mathrm{d}s. \label{eq3.17}%
\end{equation}
Let the restriction of the solution $x(t)$ of system (\ref{eq3.6}) to the
interval $\left[  t-2\bar{\tau},t\right]  $ be denoted by $x_{t}:\theta \mapsto
x(t+\theta),\theta \in \left[  -2\bar{\tau},0\right]  $, and $k_{3}\geq
4k_{2}q\mu_{2}\bar{\tau}$ be a constant. Choose the following
Lyapunov-Krasovskii-like functional
\begin{equation}
V(t,x_{t})=V_{0}(t)+k_{3}\int_{t-2\bar{\tau}}^{t}\left(  3+\frac{s-t}%
{\bar{\tau}}\right)  \gamma^{3}(s)V_{0}(s)\mathrm{d}s, \label{eq3.18}%
\end{equation}
whose time-derivative along the trajectories of system (\ref{eq3.6}) can be
evaluated as
\begin{align}
\dot{V}(t,x_{t})=  &  \dot{V}_{0}(t)+3k_{3}\gamma^{3}(t)V_{0}(t)-k_{3}%
\gamma^{3}(t-2\bar{\tau})V_{0}(t-2\bar{\tau})-\frac{k_{3}}{\bar{\tau}}%
\int_{t-2\bar{\tau}}^{t}\gamma^{3}(s)V_{0}(s)\mathrm{d}s\nonumber \\
\leq &  -\frac{\gamma}{2}V_{0}(t)+2k_{2}q\mu_{2}\int_{t-2\bar{\tau}}^{t}%
\gamma^{3}(s)V_{0}(s)\mathrm{d}s+3k_{3}\gamma^{3}(t)V_{0}(t)\nonumber \\
&  -k_{3}\gamma^{3}(t-2\bar{\tau})V_{0}(t-2\bar{\tau})-\frac{k_{3}}{\bar{\tau
}}\int_{t-2\bar{\tau}}^{t}\gamma^{3}(s)V_{0}(s)\mathrm{d}s\nonumber \\
\leq &  \left(  -\frac{\gamma}{2}+3k_{3}\gamma^{3}\right)  V_{0}(t)+\left(
2k_{2}q\mu_{2}-\frac{k_{3}}{\bar{\tau}}\right)  \int_{t-2\bar{\tau}}^{t}%
\gamma^{3}(s)V_{0}(s)\mathrm{d}s\nonumber \\
\leq &  \left(  -\frac{\gamma}{2}+3k_{3}\gamma^{3}\right)  V_{0}%
(t)-\frac{k_{3}}{2\bar{\tau}}\int_{t-2\bar{\tau}}^{t}\gamma^{3}(s)V_{0}%
(s)\mathrm{d}s, \label{eq3.19}%
\end{align}
where we have used (\ref{eq3.2}) and (\ref{eq3.17}). Clearly, there exists a
constant $t_{1}\in \lbrack2\bar{\tau},\infty)$ independent of the initial
condition such that
\[
\gamma(t)\leq \min \left \{  \frac{1}{\sqrt{12k_{3}}},\frac{2}{3\bar{\tau}%
}\right \}  ,
\]
is satisfied for all $t\geq t_{1},$ namely, $3k_{3}\gamma^{3}\leq \gamma/4$ and
$-1/(2\bar{\tau})\leq-3\gamma/4$ are satisfied for all $t\geq t_{1}.$ Then it
follows from (\ref{eq3.19}) that
\begin{align*}
\dot{V}(t,x_{t})  &  \leq-\frac{\gamma}{4}V_{0}(t)-\frac{3\gamma}{4}k_{3}%
\int_{t-2\bar{\tau}}^{t}\gamma^{3}(s)V_{0}(s)\mathrm{d}s\\
&  \leq-\frac{\gamma}{4}V_{0}(t)-\frac{\gamma}{4}k_{3}\int_{t-2\bar{\tau}}%
^{t}\left(  3+\frac{s-t}{\bar{\tau}}\right)  \gamma^{3}(s)V_{0}(s)\mathrm{d}%
s\\
&  =-\frac{\gamma}{4}V(t,x_{t}),\  \forall t\geq t_{1}.
\end{align*}
By using the comparison principle, we have
\begin{align}
V(t,x_{t})  &  \leq \exp \left(  -\frac{1}{4}\int_{t_{1}}^{t}\gamma
(s)\mathrm{d}s\right)  V(t_{1},x_{t_{1}})\nonumber \\
&  =\exp \left(  -\frac{\gamma_{0}}{4}\int_{t_{1}}^{t}\frac{\mathrm{d}%
s}{(\omega s+1)^{\mu}}\right)  V(t_{1},x_{t_{1}})\nonumber \\
&  =\exp \left(  -\frac{\gamma_{0}}{4}\int_{t_{1}}^{t}\frac{\mathrm{d}(\omega
s+1)^{1-\mu}}{\omega(1-\mu)}\right)  V(t_{1},x_{t_{1}})\nonumber \\
&  =\exp \left(  -\frac{\gamma_{0}((\omega t+1)^{1-\mu}-(\omega t_{1}%
+1)^{1-\mu})}{4\omega(1-\mu)}\right)  V(t_{1},x_{t_{1}})\nonumber \\
&  =\frac{f(t_{1})}{f(t)}V(t_{1},x_{t_{1}}),\  \forall t\geq t_{1},
\label{eq3.24}%
\end{align}
where
\begin{equation}
f(t)=\exp \left(  \frac{\gamma_{0}(\omega t+1)^{1-\mu}}{4\omega(1-\mu)}\right)
. \label{eq3.add24}%
\end{equation}
In addition, it follows from (\ref{eq3.7}) and (\ref{eq3.18}) that
\begin{align}
V(t,x_{t})  &  \geq V_{0}(t)=x^{\mathrm{T}}(t)P(t)x(t)
   \geq \mu_{1}\gamma^{\delta}\left \Vert x(t)\right \Vert ^{2}
  =\frac{\mu_{1}\gamma_{0}^{\delta}}{(\omega t+1)^{\delta \mu}}\left \Vert
x(t)\right \Vert ^{2}, \label{eq3.25}%
\end{align}
where the second inequality holds due to (\ref{pleP6}). By using
(\ref{eq3.24}) and (\ref{eq3.25}), we deduce that%
\begin{equation}
\left \Vert x(t)\right \Vert ^{2}\leq \frac{f(t_{1})}{\mu_{1}\gamma_{0}^{\delta}%
}\  \frac{(\omega t+1)^{\delta \mu}}{f(t)}V(t_{1},x_{t_{1}}),\  \forall t\geq
t_{1}. \label{eq3.26}%
\end{equation}

In addition, as the closed-loop system is a linear time-delay system with
uniformly bounded coefficients, for any $T\geq0$, there exists a constant
$c_{1}=c_{1}(T)>0$ such that (see a proof in Appendix A3)%
\begin{equation}
\left \Vert x(t)\right \Vert \leq c_{1}\left(  \left \Vert x_{0}\right \Vert +
\sup_{\theta \in[ -\bar{\tau},0)}\left \Vert u_{0}(\theta)\right \Vert \right)
,\forall t\in[0,T]. \label{eqadd1}%
\end{equation}
Thus, it follows from $V(t,x_{t})\leq c_{2}\left \Vert x_{t}\right \Vert ^{2},$
where $c_{2}>0$ is a constant, and%
\[
\lim_{t\rightarrow \infty}\frac{(\omega t+1) ^{\delta \mu}}{f(t) }%
=\lim_{s\rightarrow \infty}\frac{s^{\frac{\delta \mu}{1-\mu}}}{\exp \left(
\frac{\gamma_{0}}{4\omega(1-\mu)}s\right)  }=0,
\]
that {$\lim_{t\rightarrow \infty}\left \Vert x(t)\right \Vert =0,$ namely, the
zero solution of the closed-loop system (\ref{eq3.6}) is attractive}. Also, it
follows from (\ref{eq3.26}) and (\ref{eqadd1}) that, for any given
$\varepsilon>0,$ there exists a $\delta_{0}>0$ such that $(\Vert x_{0} \Vert+
\sup_{\theta \in[ -\bar{\tau},0)}\Vert u_{0}(\theta)\Vert) \leq \delta
_{0}\Rightarrow \left \Vert x(t)\right \Vert \leq \varepsilon,\forall t\geq0,$
namely, the zero solution of the closed-loop system (\ref{eq3.6}) is also
stable in the sense of Lyapunov. Thus (the zero solution of) the closed-loop
system (\ref{eq3.6}) is asymptotically stable, and the proof is finished.
\end{proof}

\begin{remark}
We provide some further explanations on the necessity of Assumption
\ref{ass1}. If $A$ has eigenvalues in the right half-plane, a memory-based
controller becomes necessary, as it must account for the time delay $\tau_{i}$
\cite{zhou14book}. However, in this paper, we assume that the time delay is
unknown. If $A$ has nonzero eigenvalues on the imaginary axis, though the
controller is allowed to be memoryless, the feedback gain should be dependent
on the delays $\tau_{i}$ \cite{zhou14book}, which, however, are assumed to be
unknown in this paper. It follows that, under the assumption that the delays
(and their bounds) are unknown, linear systems with only zero eigenvalues
might be the largest class of systems that can be stabilized by memoryless
state feedback.
\end{remark}

\begin{remark}
In Theorem \ref{thm1}, we utilize the assumption $\dot{\tau}_{{i}}(t)\leq
d<1$, which arises from employing the Lyapunov-Krasovskii functional
\cite{Fridman2014book}. Typically, the approach based on the Razumikhin
theorem does not require this assumption. However, it is not applicable to the
scenario in this paper. The reason is that the function $V_{{0}}(t)$ in
(\ref{eq3.7}) deviates from the conventional definition of a Lyapunov function
in the sense that it lacks a lower bound. Specifically, there is no constant
$d_{1}>0$ such that $V_{0}(t)\geq d_{1}\left \Vert x\right \Vert ^{2}$.
\end{remark}

\begin{remark}
To tackle the challenge of unknown time-varying delays, we introduce the
time-varying parameter $\gamma(t)$ as described in (\ref{eqgammat}). As a
result, the closed-loop system can only achieve asymptotic stability rather
than exponential stability. To illustrate this, let us consider a scalar
delay-free system%
\begin{equation}
\dot{x}(t)=u(t), \label{eq3.27}%
\end{equation}
which satisfies Assumptions \ref{ass1} and \ref{ass3}. According to Theorem
\ref{thm1}, we design a relevant controller%
\begin{equation}
u(t)=-\gamma(t)x(t)=-\frac{\gamma_{0}}{(\omega t+1)^{\mu}}x(t). \label{eq3.28}%
\end{equation}
Then, it follows from (\ref{eq3.27}) and (\ref{eq3.28}) that
\begin{equation}
\dot{x}(t)=-\frac{\gamma_{0}}{(\omega t+1)^{\mu}}x(t). \label{eq3.29}%
\end{equation}
Solving the differential equation in (\ref{eq3.29}) gives
\[
x(t)=\exp \left(  -\int_{0}^{t}\frac{\gamma_{0}}{(\omega s+1)^{\mu}}%
\mathrm{d}s\right)  x(0)=\frac{\exp \left(  \frac{\gamma_{0}}{\omega(1-\mu
)}\right)  }{\exp \left(  \frac{\gamma_{0}(\omega t+1)^{1-\mu}}{\omega(1-\mu
)}\right)  }x(0),
\]
from which, it is evident that the closed-loop system (\ref{eq3.29}) achieves
only asymptotic convergence instead of exponential convergence. This is due to
the fact that, for any constants $\theta_{1}>0$ and $\theta_{2}>0$, there
always exists a finite time $t_{\ast}>0$ such that
\[
\frac{\exp \left(  \frac{\gamma_{0}}{\omega(1-\mu)}\right)  }{\exp \left(
\frac{\gamma_{0}(\omega t+1)^{1-\mu}}{\omega(1-\mu)}\right)  }>\theta_{2}%
\exp(-\theta_{1}t),\  \forall t>t_{\ast}.
\]

\end{remark}

\section{Observer-Based Output Feedback Stabilization}

In this paper, we assume that $\tau_{i}(t),i\in \mathbf{I}_{{1}}^{q}$ are
unknown in Assumption \ref{ass3}, which implies that $\mathit{\Sigma}%
_{i=1}^{q}B_{i}u(t-\tau_{i}(t))$ in (\ref{eq4.0}) is unavailable. Hence the
observer-based controller (\ref{eq4.0}) is not implementable. It follows from
(\ref{eq4.0}) that
\begin{align*}
\dot{\xi}(t)=  &  A\xi(t)+\sum \limits_{i=1}^{q}B_{i}u(t-\tau_{i}%
(t))+L(y(t)-C\xi(t))\\
=  &  (A-LC)\xi(t)+Ly(t)-\sum \limits_{i=1}^{q}B_{i}{B^{\mathrm{T}}%
P(\gamma(t-\tau_{i}(t)))}\xi(t-\tau_{i}(t)).
\end{align*}
Let $A-LC$ be Hurwitz. As $\lim_{t\rightarrow \infty}{P(\gamma(t))=0}$ and
$y(t)$ is independent of $\xi(t)$, the last term $-\mathit{\Sigma}_{i=1}%
^{q}B_{i}B^{\mathrm{T}}$ $P(\gamma(t-\tau_{i}(t)))\xi(t-\tau_{i}(t))$ is
dominated by the first term $(A-LC)\xi(t)$. As a result, the observer in
(\ref{eq4.0}) can be truncated as
\[
\dot{\xi}(t)=A\xi(t)+L(y(t)-C\xi(t)),
\]
as $t\rightarrow \infty$. This observation leads us to propose the following
alternative observer-based controller
\begin{equation}
\left \{
\begin{array}
[c]{l}%
\dot{\xi}(t)=A\xi(t)+L(y(t)-C\xi(t)),\\
u(t)=-{B^{\mathrm{T}}P(\gamma(t))}\xi(t),\quad t\geq0,
\end{array}
\right.  \label{eq4.1}%
\end{equation}
where $\xi \in \mathbf{R}^{n}$ is the observed state, $L$ is the observer gain,
$P(\gamma)$ is the solution to the PLE (\ref{ple}) with $\gamma=\gamma(t)$
defined in (\ref{eqgammat}). Of course, with the above truncation, the
observer error $e=\xi-x$ no longer satisfies
\[
\dot{e}(t)=(A-LC)e(t),
\]
and thus the stability of the overall system should be carefully analyzed.

\begin{theorem}
\label{thm2} Let Assumptions \ref{ass1} and \ref{ass3} be satisfied, $(A,C)$
be observable, and $L$ be chosen such that $A-LC$ is Hurwitz. Then the
observer-based controller (\ref{eq4.1}) with $\gamma(t)$ defined in
(\ref{eqgammat}) asymptotically stabilizes system (\ref{sys}).
\end{theorem}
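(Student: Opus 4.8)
The idea is to regard the observer-based closed loop as a perturbation of the state-feedback closed loop of Theorem~\ref{thm1} and to augment the functional used there with a quadratic term for the Hurwitz error dynamics. Set $e=\xi-x$. Since the observer (\ref{eq4.1}) is memoryless and $y=Cx$, one has $\dot{\xi}=(A-LC)\xi+LCx$, hence
\[
\dot{e}(t)=(A-LC)e(t)+\eta(t),\qquad \eta(t)=\sum_{i=1}^{q}B_{i}B^{\mathrm{T}}P(\phi_{i})\xi(\phi_{i}),\quad \phi_{i}=t-\tau_{i}(t),
\]
while for $t\geq\bar{\tau}$ the plant equation becomes
\[
\dot{x}(t)=(A-BB^{\mathrm{T}}P(t))x(t)+\mathit{\Delta}(t)+\mathit{\Delta}_{e}(t),\qquad \mathit{\Delta}_{e}(t)=-\sum_{i=1}^{q}B_{i}B^{\mathrm{T}}P(\phi_{i})e(\phi_{i}),
\]
with $\mathit{\Delta}(t)$ exactly the term in (\ref{eq3.6}). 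The structural point is that $\eta$ and $\mathit{\Delta}_{e}$ are each proportional to $P(\gamma(t-\tau_{i}))\to0$, so the two subsystems are asymptotically decoupled.

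First I would re-run the computation of Theorem~\ref{thm1} for the new $x$-equation; the only extra term in $\dot{V}_{0}$ of (\ref{eq3.8}) is $2\mathit{\Delta}_{e}^{\mathrm{T}}P(t)x$, handled by Young's inequality together with $\mathit{\Delta}_{e}^{\mathrm{T}}P(t)\mathit{\Delta}_{e}\leq\mu_{2}\gamma\Vert\mathit{\Delta}_{e}\Vert^{2}$ from (\ref{pleP6}). In parallel, with $Q>0$ solving $(A-LC)^{\mathrm{T}}Q+Q(A-LC)=-I_{n}$ and $W_{0}(t)=e^{\mathrm{T}}(t)Qe(t)$, one has $\dot{W}_{0}\leq-\tfrac12\Vert e\Vert^{2}+2\Vert Q\Vert^{2}\Vert\eta\Vert^{2}$. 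The delicate point is to bound $\Vert\mathit{\Delta}_{e}\Vert^{2}$ and $\Vert\eta\Vert^{2}$ without generating the factor $\gamma^{-\delta}$ (which would come from bounding $\Vert x\Vert^{2}$ via $V_{0}/(\mu_{1}\gamma^{\delta})$) or an infinite recursion in $\Vert\eta\Vert^{2}$. The remedy is to write the only delayed vectors that occur, $B^{\mathrm{T}}P(\phi_{i})x(\phi_{i})=B^{\mathrm{T}}\psi(\phi_{i})$ and $B^{\mathrm{T}}P(\phi_{i})e(\phi_{i})=B^{\mathrm{T}}\chi(\phi_{i})$ (with $\psi=Px$ as in the proof of Theorem~\ref{thm1} and $\chi=Pe$), as a current value minus an integral of a derivative, in the manner of (\ref{eq3.9}): here $\Vert B^{\mathrm{T}}\psi(t)\Vert^{2}=\Vert u(t)\Vert^{2}\leq n\gamma V_{0}$ by (\ref{pleP2}) and $\Vert B^{\mathrm{T}}\chi(t)\Vert^{2}\leq n\mu_{2}\gamma^{2}\Vert e\Vert^{2}$ by (\ref{pleP2})--(\ref{pleP6}) (both with positive powers of $\gamma$), while $\tfrac{\mathrm{d}}{\mathrm{d}s}(B^{\mathrm{T}}\psi)$ and $\tfrac{\mathrm{d}}{\mathrm{d}s}(B^{\mathrm{T}}\chi)$ are bounded, via (\ref{pleP2})--(\ref{pleP5}) and the estimate $\dot{\gamma}^{2}\leq k^{2}\gamma_{0}^{2/\mu-2}\gamma^{4}$ from the proof of Theorem~\ref{thm1}, by quantities of type $\gamma^{3}V_{0}$, $\gamma^{2}\Vert e\Vert^{2}$ and $\gamma^{2}\Vert\eta\Vert^{2}$. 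In the resulting integrals $\Vert\eta\Vert^{2}$ occurs only under the integral sign, so the pointwise bound $\Vert\eta(s)\Vert^{2}\leq C\sum_{i}\bigl(\gamma(\phi_{i}(s))V_{0}(\phi_{i}(s))+\gamma^{2}(\phi_{i}(s))\Vert e(\phi_{i}(s))\Vert^{2}\bigr)$ followed by the change of variables $\theta_{i}=\phi_{i}(s)$ — here Assumption~\ref{ass3}, $\dot{\tau}_{i}\leq d<1$, enters as before (\ref{eq3.15}) — turns everything into $\int_{t-2\bar{\tau}}^{t}\gamma^{3}(s)V_{0}(s)\,\mathrm{d}s$ and $\int_{t-2\bar{\tau}}^{t}\gamma^{2}(s)\Vert e(s)\Vert^{2}\,\mathrm{d}s$ with no leftover recursion. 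One arrives at $\dot{V}_{0}\leq-\tfrac{\gamma}{2}V_{0}+C\bigl(\gamma^{2}\Vert e\Vert^{2}+\int_{t-2\bar{\tau}}^{t}(\gamma^{3}V_{0}+\gamma^{2}\Vert e\Vert^{2})\,\mathrm{d}s\bigr)$ and $\dot{W}_{0}\leq-\tfrac12\Vert e\Vert^{2}+C\bigl(\gamma V_{0}+\gamma^{2}\Vert e\Vert^{2}+\int_{t-2\bar{\tau}}^{t}(\gamma^{3}V_{0}+\gamma^{2}\Vert e\Vert^{2})\,\mathrm{d}s\bigr)$.

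Next I would take the combined functional
\[
\mathcal{V}(t)=V(t,x_{t})+\rho\Bigl(W_{0}(t)+k_{4}\!\int_{t-2\bar{\tau}}^{t}\!\Bigl(3+\tfrac{s-t}{\bar{\tau}}\Bigr)\gamma^{2}(s)\Vert e(s)\Vert^{2}\,\mathrm{d}s\Bigr),
\]
where $V(t,x_{t})$ is the functional (\ref{eq3.18}) (with $k_{3}$ enlarged as needed), $\rho>0$ small and $k_{4}>0$ large. Choosing $\rho$ small so the $\rho C\gamma V_{0}$ term is dominated by $-\tfrac{\gamma}{2}V_{0}$, and then $k_{3},k_{4}$ large so the two integral perturbations are dominated by the $-\tfrac{1}{\bar{\tau}}\!\int$ terms produced by the Krasovskii kernels, one gets, for all $t$ beyond some $t_{3}\geq2\bar{\tau}$ independent of the initial data (so that $\gamma(t)$ is small enough that e.g. $3k_{3}\gamma^{3}(t)\leq\tfrac{\gamma(t)}{8}$ and $C\gamma^{2}(t)\leq\tfrac{\rho}{4}$, exactly as $t_{1}$ was chosen after (\ref{eq3.19})), the estimate $\dot{\mathcal{V}}(t)\leq-c\,\gamma(t)\mathcal{V}(t)$ for some constant $c>0$, using also $V_{0}\leq\mathcal{V}$ and $W_{0}\leq\lambda_{\max}(Q)\Vert e\Vert^{2}$. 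The comparison principle then gives, exactly as in (\ref{eq3.24}), $\mathcal{V}(t)\leq(f(t_{3})/f(t))^{4c}\mathcal{V}(t_{3})$ with $f$ as in (\ref{eq3.add24}). Hence $\Vert x(t)\Vert^{2}\leq V_{0}(t)/(\mu_{1}\gamma^{\delta}(t))\leq(\omega t+1)^{\delta\mu}\mathcal{V}(t)/(\mu_{1}\gamma_{0}^{\delta})\to0$ (the polynomial beaten by $f(t)^{-4c}$, exactly as at the end of the proof of Theorem~\ref{thm1}) and $\Vert e(t)\Vert^{2}\leq W_{0}(t)/\lambda_{\min}(Q)\leq\mathcal{V}(t)/(\rho\lambda_{\min}(Q))\to0$, so $\xi=x+e\to0$ and $u(t)=-B^{\mathrm{T}}P(\gamma(t))\xi(t)\to0$. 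Stability of the origin in the sense of Lyapunov follows as in Theorem~\ref{thm1}: (\ref{eq4.1}) together with (\ref{sys}) is a linear retarded system with uniformly bounded coefficients, so on every $[0,T]$ its solution is bounded by a constant multiple of $\Vert x_{0}\Vert+\Vert\xi_{0}\Vert+\sup_{\theta}\Vert u_{0}(\theta)\Vert$ (cf. Appendix A3), which with the decay estimate yields asymptotic stability.

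The step I expect to be hardest is the second one: bounding $\eta$ and $\mathit{\Delta}_{e}$ — which couple the error dynamics to delayed state values through $P(\gamma(t-\tau_{i}))x(t-\tau_{i})$ — so that neither a $\gamma^{-\delta}$ blow-up nor an endless recursion of $\Vert\eta\Vert^{2}$-in-$\Vert\eta\Vert^{2}$ occurs. The resolution rests on two facts already in the paper: after one integration by parts the only delayed state vector that survives is $B^{\mathrm{T}}P(\phi_{i})(\cdot)$, whose current value is $-u(t)$ (respectively $-B^{\mathrm{T}}\chi(t)$) and is controlled with a positive power of $\gamma$ by (\ref{pleP2}) and (\ref{pleP6}); and $\Vert\eta\Vert^{2}$ thereafter appears only inside integrals over $[t-\bar{\tau},t]$, where the pointwise estimate together with the Assumption~\ref{ass3} change of variables closes the argument cleanly.
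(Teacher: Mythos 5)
Your proposal is correct and follows essentially the same route as the paper's proof: the same error decomposition $e=\xi-x$, a small-weighted quadratic form $e^{\mathrm{T}}Qe$ for the Hurwitz error dynamics combined with the functional of Theorem~\ref{thm1} plus delayed-integral terms in $\gamma^{2}\Vert e\Vert^{2}$, the same PLE bounds to keep every coefficient a positive power of $\gamma$, and the same comparison-principle endgame. The only technical divergence is that you apply the $\psi=Px$, $\chi=Pe$ integral representation to \emph{all} delayed vectors, whereas the paper bounds the delayed terms in $\xi_{3}$ and $\xi_{4}$ pointwise and cancels them with the extra Krasovskii integrals $U_{2},U_{4}$ carrying $\bigl(2+\frac{s-t}{\bar{\tau}}\bigr)$ kernels; both devices close the argument in the same way.
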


\begin{proof}
Following a similar approach in the proof of Theorem \ref{thm1}, we consider
the closed-loop system with $t \geq2\bar{\tau}$. Let $e(t)=\xi(t)-x(t)$ and
denote $P(t)=P(\gamma(t)) $. We can express the system consisting of
(\ref{sys}) and (\ref{eq4.1}) as
\begin{align}
\dot{e}(t)  &  = (A-LC) e(t)+ \sum \limits_{i=1}^{q}B_{i}{B^{\mathrm{T}}%
P(\phi_{i})}(x(\phi_{i}) +e(\phi_{i})) ,\label{eq4.2add}\\
\dot{x}(t)  &  =Ax(t)-\sum \limits_{i=1}^{q}B_{i}{B^{\mathrm{T}}P(\phi_{i}%
)}(x(\phi_{i}) +e(\phi_{i})) . \label{eq4.2}%
\end{align}
Denote
\[
\mathit{\Xi}(t)=\mathit{\Delta}-\sum \limits_{i=1}^{q}B_{i}B^{\mathrm{T}%
}P{(\phi_{i})}e(\phi_{i}) ,
\]
where $\mathit{\Delta}$ is defined in (\ref{eq3.5}). Then (\ref{eq4.2}) can be
further written as%
\begin{equation}
\dot{x}(t)=(A-BB^{\mathrm{T}}P(t)) x(t)+\mathit{\Xi}(t). \label{eq4.4}%
\end{equation}
In the remaining of the proof we will omit the explicit dependence of
variables on $t$ when necessary.

As $A-LC$ is Hurwitz, there exists a constant $\rho>0$ and a matrix $Q>0$ such
that
\begin{equation}
(A-LC)^{\mathrm{T}}Q+Q(A-LC)\leq-\rho Q. \label{eq4.5}%
\end{equation}
We construct the function%
\[
U_{0}(t)=U_{0}(e(t))=\mathcal{\kappa}_{0}e^{\mathrm{T}}(t)Qe(t),
\]
where $\mathcal{\kappa}_{0}>0$ is a constant to be designed. Defining%
\begin{equation}
\xi_{3}(t)=\sum \limits_{i=1}^{q}B_{i}{B^{\mathrm{T}}P(\phi_{i})}(x(\phi
_{i})+e(\phi_{i})), \label{eq4.add6}%
\end{equation}
and differentiating $U_{0}(t)$ along the trajectories of system
(\ref{eq4.2add}) gives
\begin{align}
\dot{U}_{0}(t)\leq &  -\rho \mathcal{\kappa}_{0}e^{\mathrm{T}}%
Qe+2\mathcal{\kappa}_{0}e^{\mathrm{T}}Q\xi_{3}\nonumber \\
\leq &  -\rho \mathcal{\kappa}_{0}e^{\mathrm{T}}Qe+\frac{\rho}{2}%
\mathcal{\kappa}_{0}e^{\mathrm{T}}Qe+\frac{2}{\rho}\mathcal{\kappa}_{0}\xi
_{3}^{\mathrm{T}}Q\xi_{3}\nonumber \\
=  &  -\frac{\rho}{2}U_{0}(t)+\frac{2}{\rho}\mathcal{\kappa}_{0}\xi
_{3}^{\mathrm{T}}Q\xi_{3}, \label{eq4.7}%
\end{align}
where we have used (\ref{eq4.5}) and the Young's inequality. It follows from
(\ref{eq4.add6}) that
\begin{align}
\xi_{3}^{\mathrm{T}}Q\xi_{3}\  &  \leq q\sum \limits_{i=1}^{q}(x^{\mathrm{T}%
}(\phi_{i})+e^{\mathrm{T}}(\phi_{i})){P(\phi_{i})B}B_{i}^{\mathrm{T}}%
QB_{i}{B^{\mathrm{T}}P(\phi_{i})}(x(\phi_{i})+e(\phi_{i}))\nonumber \\
&  \leq qn\left \Vert Q\right \Vert \sum \limits_{i=1}^{q}\left \Vert
B_{i}\right \Vert ^{2}{\gamma(\phi_{i})}(x^{\mathrm{T}}(\phi_{i})+e^{\mathrm{T}%
}(\phi_{i})){P(\phi_{i})}(x(\phi_{i})+e(\phi_{i}))\nonumber \\
&  \leq2qn\left \Vert Q\right \Vert \sum \limits_{i=1}^{q}\left \Vert
B_{i}\right \Vert ^{2}{\gamma(\phi_{i})}x^{\mathrm{T}}(\phi_{i}){P(\phi_{i}%
)}x(\phi_{i})+2qn\left \Vert Q\right \Vert \sum \limits_{i=1}^{q}\left \Vert
B_{i}\right \Vert ^{2}{\gamma(\phi_{i})}e^{\mathrm{T}}(\phi_{i}){P(\phi_{i}%
)}e(\phi_{i})\nonumber \\
&  \leq2qn\left \Vert Q\right \Vert \sum \limits_{i=1}^{q}\left \Vert
B_{i}\right \Vert ^{2}{\gamma(\phi_{i})}x^{\mathrm{T}}(\phi_{i}){P(\phi_{i}%
)}x(\phi_{i})+2\mu_{2}qn\left \Vert Q\right \Vert \left \Vert Q^{-1}\right \Vert
\sum \limits_{i=1}^{q}\left \Vert B_{i}\right \Vert ^{2}{\gamma}^{2}{(\phi_{i}%
)}e^{\mathrm{T}}(\phi_{i})Qe(\phi_{i})\nonumber \\
&  \leq2qn\left \Vert Q\right \Vert \sum \limits_{i=1}^{q}\left \Vert
B_{i}\right \Vert ^{2}\sum \limits_{i=1}^{q}{\gamma(\phi_{i})}x^{\mathrm{T}%
}(\phi_{i}){P(\phi_{i})}x(\phi_{i})+\frac{2}{\mathcal{\kappa}_{0}}\mu
_{2}qn\left \Vert Q\right \Vert \left \Vert Q^{-1}\right \Vert \sum \limits_{i=1}%
^{q}\left \Vert B_{i}\right \Vert ^{2}\sum \limits_{i=1}^{q}{\gamma}^{2}%
{(\phi_{i})}U_{0}(\phi_{i}), \label{eq4add7}%
\end{align}
where the first inequality holds due to the Jensen's inequality, the second
inequality holds due to (\ref{pleP2}), the third inequality holds due to the
Young's inequality, and the forth inequality holds due to (\ref{pleP6}).
Substituting (\ref{eq4add7}) into (\ref{eq4.7}) gives%
\begin{equation}
\dot{U}_{0}(t)\leq-\frac{\rho}{2}U_{0}(t)+\epsilon_{1}\sum \limits_{i=1}%
^{q}{\gamma(\phi_{i})}x^{\mathrm{T}}(\phi_{i}){P(\phi_{i})}x(\phi_{i}%
)+\frac{4}{\rho}\mu_{2}qn\left \Vert Q\right \Vert \left \Vert Q^{-1}\right \Vert
\sum \limits_{i=1}^{q}\left \Vert B_{i}\right \Vert ^{2}\sum \limits_{i=1}%
^{q}{\gamma}^{2}{(\phi_{i})}U_{0}(\phi_{i}), \label{eq4add8}%
\end{equation}
with $\epsilon_{1}=\frac{4}{\rho}\mathcal{\kappa}_{0}qn\left \Vert Q\right \Vert
\mathit{\Sigma}_{i=1}^{q}\left \Vert B_{i}\right \Vert ^{2}$.

Consider another function $U_{1}(t)=V_{0}(t)$, where $V_{0}(t)$ is defined in
(\ref{eq3.7}). Similar to (\ref{eq3.8}), we have
\begin{equation}
\dot{U}_{1}(t)\leq-\frac{\gamma}{2}U_{1}(t)+\frac{2}{\gamma}\mathit{\Xi
}^{\mathrm{T}}(t)P(t)\mathit{\Xi}(t). \label{eq4.9}%
\end{equation}
Notice that
\begin{equation}
\mathit{\Xi}^{\mathrm{T}}P(t)\mathit{\Xi}\leq2\mathit{\Delta}^{\mathrm{T}%
}P(t)\mathit{\Delta}+2\xi_{4}^{\mathrm{T}}P(t)\xi_{4}, \label{eq4.10}%
\end{equation}
where we have used Young's inequality and the definition%
\[
\xi_{4}(t)=\sum \limits_{i=1}^{q}B_{i}B^{\mathrm{T}}P{(\phi_{i})}e(\phi_{i}).
\]
It is easy to deduce that%
\begin{align}
\xi_{4}^{\mathrm{T}}P(t)\xi_{4}  &  \leq q\sum \limits_{i=1}^{q}e^{\mathrm{T}%
}(\phi_{i})P{(\phi_{i})}BB_{i}^{\mathrm{T}}P(t)B_{i}B^{\mathrm{T}}P{(\phi
_{i})}e(\phi_{i})\nonumber \\
&  \leq q\mu_{2}\gamma \sum \limits_{i=1}^{q}e^{\mathrm{T}}(\phi_{i})P{(\phi
_{i})}BB_{i}^{\mathrm{T}}B_{i}B^{\mathrm{T}}P{(\phi_{i})}e(\phi_{i}%
)\nonumber \\
&  \leq q\mu_{2}n\gamma \sum \limits_{i=1}^{q}\left \Vert B_{i}\right \Vert
^{2}{\gamma}(\phi_{i})e^{\mathrm{T}}(\phi_{i})P{(\phi_{i})}e(\phi
_{i})\nonumber \\
&  \leq q\mu_{2}^{2}n\left \Vert Q^{-1}\right \Vert \gamma \sum \limits_{i=1}%
^{q}\left \Vert B_{i}\right \Vert ^{2}{\gamma}^{2}(\phi_{i})e^{\mathrm{T}}%
(\phi_{i})Qe(\phi_{i})\nonumber \\
&  =\frac{q\mu_{2}^{2}n\left \Vert Q^{-1}\right \Vert }{\mathcal{\kappa}_{0}%
}\gamma \sum \limits_{i=1}^{q}\left \Vert B_{i}\right \Vert ^{2}{\gamma}^{2}%
{(\phi_{i})}U_{0}(\phi_{i}), \label{eq4.add10}%
\end{align}
where the first inequality is justified by the Jensen's inequality, the second
inequality follows from (\ref{pleP6}), the third inequality is derived from
(\ref{pleP2}), and the fourth inequality is supported by (\ref{pleP6}) again.

\qquad On the other hand, it follows from the denotation $\psi(t)=P(t)x(t)$
and (\ref{eq4.2}) that%
\begin{align*}
\dot{\psi}(t)= &  P(t)\dot{x}+\dot{P}(t)x\\
= &  P(t)\left(  Ax-\sum \limits_{i=1}^{q}B_{i}{B^{\mathrm{T}}P(\phi_{i}%
)}\left(  x\left(  \phi_{i}\right)  +e\left(  \phi_{i}\right)  \right)
\right)  +\dot{\gamma}\frac{\partial P(t)}{\partial \gamma}x\\
= &  \left(  \! \!P(t)A\!+\! \dot{\gamma}\frac{\partial P(t)}{\partial \gamma
}\right)  x\!-P(t)\sum \limits_{i=1}^{q}B_{i}{B^{\mathrm{T}}P(\phi_{i})}\left(
x\left(  \phi_{i}\right)  +e\left(  \phi_{i}\right)  \right)  .
\end{align*}
Similar to (\ref{eq3.9}), we have%
\begin{align*}
\mathit{\Delta}_{i}(t)= &  B_{i}B^{\mathrm{T}}\left(  \psi(t)-\psi(\phi
_{i})\right)
=    B_{i}B^{\mathrm{T}}\int_{t-\tau_{i}}^{t}\left(  \xi_{1}(s)-P(s)\xi
_{5}(s)\right)  \mathrm{d}s,\  \forall t\geq2\bar{\tau},
\end{align*}
where $\xi_{1}(t)$ is defined in (\ref{eq3.9}) and,
\[
\xi_{5}(t)=\sum \limits_{j=1}^{q}B_{j}B^{\mathrm{T}}P(\phi_{j})\left(
x(\phi_{j})+e\left(  \phi_{j}\right)  \right)  .
\]
By using the above relation, together with (\ref{eq3.5}), we deduce that
\[
\mathit{\Delta}^{\mathrm{T}}P(t)\mathit{\Delta}\leq q\mu_{2}\gamma
\sum \limits_{i\!=\!1}^{q}\! \! \left(  \!(1\!+\!k_{5})\beta_{1i}\!+\! \! \left(
1\!+\! \! \frac{1}{k_{5}}\right)  \! \! \beta_{3i}\right)  ,
\]
where $\!k_{5}>0$ is a constant to be determined, $\beta_{1i}$ is defined in
(\ref{eq3.10}), and
\[
\beta_{3i}=\! \left(  \int_{t\!-\! \tau_{i}}^{t}\! \!P(s)\xi_{5}(s)\mathrm{d}%
s\right)  ^{\mathrm{T}}\! \!BB_{i}^{\mathrm{T}}B_{i}B^{\mathrm{T}\!}\left(
\int_{t\!-\! \tau_{i}}^{t}\! \!P(s)\xi_{5}(s)\mathrm{d}s\right)  .
\]
It is easy to deduce that%
\begin{align*}
\beta_{3i}\leq &  \left \Vert B_{i}\right \Vert ^{2}\tau_{i}\int_{t-\tau_{i}%
}^{t}\xi_{5}^{\mathrm{T}}(s)P(s)BB^{\mathrm{T}}P(s)\xi_{5}(s)\mathrm{d}s\\
\leq &  \bar{\tau}n\left \Vert B_{i}\right \Vert ^{2}\int_{t-\bar{\tau}}%
^{t}\gamma(s)\xi_{5}^{\mathrm{T}}(s)P(s)\xi_{5}(s)\mathrm{d}s\\
\leq &  \bar{\tau}n\mu_{2}\left \Vert B_{i}\right \Vert ^{2}\int_{t-\bar{\tau}%
}^{t}\gamma^{2}(s)\left \Vert \xi_{5}(s)\right \Vert ^{2}\mathrm{d}s,
\end{align*}
where we have used the Jensen's inequality, the fact that $\tau_{i}\leq
\bar{\tau}$, and inequalities (\ref{pleP2}) and (\ref{pleP6}). By using the
Jensen's inequality and (\ref{pleP2}) again, we have%
\begin{align*}
\left \Vert \xi_{5}(s)\right \Vert ^{2}\leq &  2q\sum \limits_{j\!=\!1}%
^{q}x^{\mathrm{T}}(\phi_{j}(s))P(\phi_{j}(s))BB_{j}^{\mathrm{T}}%
B_{j}B^{\mathrm{T}}P(\phi_{j}(s))x(\phi_{j}(s))\\
&  +2q\sum \limits_{j\!=\!1}^{q}e^{\mathrm{T}}(\phi_{j}(s))P(\phi_{j}%
(s))BB_{j}^{\mathrm{T}}B_{j}B^{\mathrm{T}}P(\phi_{j}(s))e(\phi_{j}(s))\\
\leq &  2q\sum \limits_{j\!=\!1}^{q}\left \Vert B_{j}\right \Vert ^{2}%
x^{\mathrm{T}}(\phi_{j}(s))P(\phi_{j}(s))BB^{\mathrm{T}}P(\phi_{j}%
(s))x(\phi_{j}(s))\\
&  +2q\sum \limits_{j\!=\!1}^{q}\left \Vert B_{j}\right \Vert ^{2}e^{\mathrm{T}%
}(\phi_{j}(s))P(\phi_{j}(s))BB^{\mathrm{T}}P(\phi_{j}(s))e(\phi_{j}(s))\\
\leq &  2nq\sum \limits_{j\!=\!1}^{q}\left \Vert B_{j}\right \Vert ^{2}%
\gamma(\phi_{j}(s))x^{\mathrm{T}}(\phi_{j}(s))P(\phi_{j}(s))x(\phi_{j}(s))\\
&  +2nq\sum \limits_{j\!=\!1}^{q}\left \Vert B_{j}\right \Vert ^{2}\gamma
(\phi_{j}(s))e^{\mathrm{T}}(\phi_{j}(s))P(\phi_{j}(s))e(\phi_{j}(s))\\
\leq & 2nq\sum \limits_{j=1}^{q}\left \Vert B_{j}\right \Vert ^{2}\gamma(\phi
_{j}(s))U_{1}(\phi_{j}(s))+\frac{2nq\mu_{2}}{\mathcal{\kappa}_{0}}\left \Vert
Q^{-1}\right \Vert \sum \limits_{j\!=\!1}^{q}\left \Vert B_{j}\right \Vert
^{2}\gamma^{2}(\phi_{j}(s))U_{0}(\phi_{j}(s)),
\end{align*}
substitution of which into $\beta_{3i}$ gives
\begin{align*}
\beta_{3i}\leq &  2\bar{\tau}n^{2}\mu_{2}q\left \Vert B_{i}\right \Vert ^{2}%
\int_{t-\bar{\tau}}^{t}\sum \limits_{j\!=\!1}^{q}\! \! \left \Vert B_{j}%
\right \Vert ^{2}\gamma^{2}(s)\gamma(\phi_{j}(s))U_{1}(\phi_{j}(s))\mathrm{d}%
s\\
&  +\frac{2\bar{\tau}n^{2}\mu_{2}^{2}q\left \Vert Q^{-1}\right \Vert \left \Vert
B_{i}\right \Vert ^{2}}{\mathcal{\kappa}_{0}}\int_{t-\bar{\tau}}^{t}%
\sum \limits_{j\!=\!1}^{q}\! \! \left \Vert B_{j}\right \Vert ^{2}\gamma
^{2}(s)\gamma^{2}(\phi_{j}(s))U_{0}(\phi_{j}(s))\mathrm{d}s\\
\leq &  2\bar{\tau}n^{2}\mu_{2}q\left \Vert B_{i}\right \Vert ^{2}\int
_{t-\bar{\tau}}^{t}\sum \limits_{j=1}^{q}\left \Vert B_{j}\right \Vert ^{2}%
\gamma^{3}(\phi_{j}(s))U_{1}(\phi_{j}(s))\mathrm{d}s\\
&  +\frac{2\bar{\tau}n^{2}\mu_{2}^{2}q\left \Vert Q^{-1}\right \Vert \left \Vert
B_{i}\right \Vert ^{2}}{\mathcal{\kappa}_{0}}\int_{t-\bar{\tau}}^{t}%
\sum \limits_{j\!=\!1}^{q}\! \! \left \Vert B_{j}\right \Vert ^{2}\gamma^{4}%
(\phi_{j}(s))U_{0}(\phi_{j}(s))\mathrm{d}s\\
\leq &  2\bar{\tau}n^{2}\mu_{2}q\left \Vert B_{i}\right \Vert ^{2}%
\sum \limits_{j=\!1}^{q}\! \! \left \Vert B_{j}\right \Vert ^{2}\sum \limits_{j=1}%
^{q}\int_{t-\bar{\tau}}^{t}\gamma^{3}(\phi_{j}(s))U_{1}(\phi_{j}%
(s))\mathrm{d}s\\
&  +\frac{2\bar{\tau}n^{2}\mu_{2}^{2}q\gamma_{0}\left \Vert Q^{-1}\right \Vert
\left \Vert B_{i}\right \Vert ^{2}}{\mathcal{\kappa}_{0}}\sum \limits_{j=\!1}%
^{q}\! \! \left \Vert B_{j}\right \Vert ^{2}\sum \limits_{j\!=\!1}^{q}%
\! \! \int_{t-\bar{\tau}}^{t}\gamma^{3}(\phi_{j}(s))U_{0}(\phi_{j}%
(s))\mathrm{d}s,
\end{align*}
where we have used the fact that $\gamma(t)\leq \gamma \left(  \phi
_{j}(t)\right)  \leq \gamma_{0}$. In addition, Using the argument similar to
the proof of Theorem \ref{thm1} yileds
\begin{align*}
&  \sum \limits_{j=1}^{q}\int_{t-\bar{\tau}}^{t}\gamma^{3}(\phi_{j}%
(s))U_{1}(\phi_{j}(s))\mathrm{d}s\leq \frac{q}{1-d}\int_{t-2\bar{\tau}}%
^{t}\gamma^{3}(s)U_{1}(s)\mathrm{d}s,\\
&  \sum \limits_{j\!=\!1}^{q}\! \! \int_{t-\bar{\tau}}^{t}\gamma^{3}(\phi
_{j}(s))U_{0}(\phi_{j}(s))\mathrm{d}s\leq \frac{q}{1-d}\int_{t-2\bar{\tau}}%
^{t}\gamma^{3}(s)U_{0}(s)\mathrm{d}s.
\end{align*}
With this, the inequality in $\beta_{3i}$ can be further written as
\begin{align*}
\beta_{3i}\! \leq &  \! \frac{2\bar{\tau}n^{2}\mu_{2}q^{2}}{1\!-\!d}\left \Vert
B_{i}\right \Vert ^{2}\sum \limits_{j=\!1}^{q}\! \! \left \Vert B_{j}\right \Vert
^{2}\int_{t-2\bar{\tau}}^{t}\gamma^{3}(s)U_{1}(s)\mathrm{d}s  +\frac{2\bar{\tau}n^{2}\mu_{2}^{2}q^{2}\gamma_{0}}{\mathcal{\kappa}%
_{0}\left(  1\!-\!d\right)  }\left \Vert Q^{-1}\right \Vert \left \Vert
B_{i}\right \Vert ^{2}\sum \limits_{j=\!1}^{q}\! \! \left \Vert B_{j}\right \Vert
^{2}\int_{t-2\bar{\tau}}^{t}\gamma^{3}(s)U_{0}(s)\mathrm{d}s.
\end{align*}

Then it follows from (\ref{eq3.12}) and the above inequality of $\beta_{3i}$
that%
\begin{align*}
\mathit{\Delta}^{\mathrm{T}}P(t)\mathit{\Delta} &  \leq q\mu_{2}\gamma \left(
\left(  1+k_{5}\right)  \sum \limits_{i=1}^{q}\beta_{1i}(t)+\left(  1+\frac
{1}{k_{5}}\right)  \sum \limits_{i=1}^{q}\beta_{3i}(t)\right)  \\
&  =k_{6}q\mu_{2}\gamma \int_{t-2\bar{\tau}}^{t}\gamma^{3}(s)U_{1}\left(
s\right)  \mathrm{d}s+k_{7}q\mu_{2}\gamma \int_{t-2\bar{\tau}}^{t}\gamma
^{3}(s)U_{0}(s)\mathrm{d}s\\
&  \leq k_{8}q\mu_{2}\gamma \int_{t-2\bar{\tau}}^{t}\gamma^{3}(s)\left(
U_{0}(s)+U_{1}\left(  s\right)  \right)  \mathrm{d}s,
\end{align*}
where $k_{8}=\max \left \{  k_{6},k_{7}\right \}$ with
\begin{align*}
\frac{k_{6}}{2} &  \triangleq \left(  1+\frac{1}{k_{5}}\right)  \frac
{2\bar{\tau}n^{2}\mu_{2}q^{2}}{1\!-\!d}\left(  \sum \limits_{k=\!1}%
^{q}\! \! \left \Vert B_{k}\right \Vert ^{2}\right)  ^{2}=\left(  1+k_{5}\right)
2\bar{\tau}\sum \limits_{k=1}^{q}\left \Vert B_{k}\right \Vert ^{2}\left(
3n^{3}+\delta_{\mathrm{c}}k^{2}\gamma_{0}^{2/\mu-2}\right)  ,\\
k_{7} &  =\left(  1+\frac{1}{k_{5}}\right)  \frac{2\bar{\tau}n^{2}\mu_{2}%
^{2}q^{2}\gamma_{0}}{\mathcal{\kappa}_{0}\left(  1\!-\!d\right)  }\left \Vert
Q^{-1}\right \Vert \left(  \sum \limits_{k=\!1}^{q}\! \! \left \Vert B_{k}%
\right \Vert ^{2}\right)  ^{2}.
\end{align*}

Subsequently, substituting (\ref{eq4.10}), (\ref{eq4.add10}) and the above
inequality of $\mathit{\Delta}^{\mathrm{T}}P(t)\mathit{\Delta}$ into
(\ref{eq4.9}) gives
\begin{align}
\dot{U}_{1}(t)\leq &  -\frac{\gamma}{2}U_{1}(t)+4k_{8}q\mu_{2}\int
_{t-2\bar{\tau}}^{t}\gamma^{3}(s)\left(  U_{0}(s)+U_{1}(s)\right)
\mathrm{d}s+\frac{4q\mu_{2}^{2}n\left \Vert Q^{-1}\right \Vert }{\mathcal{\kappa
}_{0}}\sum \limits_{k=\!1}^{q}\! \! \left \Vert B_{k}\right \Vert ^{2}{\gamma}%
^{2}{(\phi_{i})}U_{0}(\phi_{i})\nonumber \\
\leq &  -\frac{\gamma}{2}U_{1}(t)+4k_{8}q\mu_{2}\int_{t-2\bar{\tau}}^{t}%
\gamma^{3}(s)\left(  U_{0}(s)+U_{1}(s)\right)  \mathrm{d}s+\frac{4q\mu_{2}%
^{2}n\left \Vert Q^{-1}\right \Vert }{\mathcal{\kappa}_{0}}\sum \limits_{k=1}%
^{q}\left \Vert B_{k}\right \Vert ^{2}\sum \limits_{i=1}^{q}{\gamma}^{2}%
{(\phi_{i})}U_{0}(\phi_{i}),\label{eq4.11r1}%
\end{align}
where we have used $\dot{\gamma}(t)<0,$ $\forall t\geq0$. It follows from
(\ref{eq4add8}) and (\ref{eq4.11r1}) that
\begin{align}
\dot{U}_{0}(t)+\dot{U}_{1}(t)\leq &  4k_{8}q\mu_{2}\int_{t-2\bar{\tau}}%
^{t}\gamma^{3}(s)\left(  U_{0}(s)+U_{1}\left(  s\right)  \right)
\mathrm{d}s+\epsilon_{1}\sum \limits_{i=1}^{q}{\gamma \left(  \phi_{i}\right)
}U_{1}(\phi_{i})\nonumber \\
&  -\frac{\rho}{2}U_{0}(t)+\epsilon_{2}\sum \limits_{i=1}^{q}{\gamma}%
^{2}{\left(  \phi_{i}\right)  }U_{0}(\phi_{i})-\frac{\gamma}{2}U_{1}%
(t),\label{eq4.add1}%
\end{align}
with
\[
\epsilon_{2}=4\left(  \frac{\left \Vert Q\right \Vert }{\rho}+\frac{\mu_{2}%
}{\mathcal{\kappa}_{0}}\right)  q\mu_{2}n\left \Vert Q^{-1}\right \Vert
\sum \limits_{k=\!1}^{q}\! \! \left \Vert B_{k}\right \Vert ^{2},
\]

which prompts us to choose the following Lyapunov-Krasovskii-like functional
\begin{equation}
U(t,x_{t},e_{t})=U_{0}(t)+U_{1}(t)+U_{2}(t,e_{t})+U_{3}(t,x_{t})+U_{4}%
(t,x_{t}), \label{eq4.12}%
\end{equation}
where%
\begin{align*}
U_{2}(t,e_{t})=  &  \frac{\epsilon_{2}}{1-d}\sum \limits_{i=1}^{q}\int
_{t-\tau_{i}}^{t}\left(  2+\frac{s-t}{\bar{\tau}}\right)  \gamma^{2}%
(s)U_{0}(s)\mathrm{d}s,\\
U_{3}(t,x_{t})=  &  \mathcal{\kappa}_{3}\int_{t-2\bar{\tau}}^{t}\left(
3+\frac{s-t}{\bar{\tau}}\right)  \gamma^{3}(s)\left(  U_{0}(s)+U_{1}%
(s)\right)  \mathrm{d}s,\\
U_{4}(t,x_{t})=  &  \frac{\epsilon_{1}}{1-d}\sum \limits_{i=1}^{q}\int
_{t-\tau_{i}}^{t}\left(  2+\frac{s-t}{\bar{\tau}}\right)  \gamma
(s)U_{1}(s)\mathrm{d}s,
\end{align*}
with $\mathcal{\kappa}_{3}>0$ being a constant to be designed.

The time derivative of $U_{2}(t,e_{t})$ can be calculated as%
\begin{align}
\dot{U}_{2}(t,e_{t})=  &  \frac{2\epsilon_{2}q}{1-d}\gamma^{2}U_{0}%
(t)-\frac{\epsilon_{2}}{(1-d)\bar{\tau}}\sum \limits_{i=1}^{q}\int_{t-\tau_{i}%
}^{t}\gamma^{2}(s)U_{0}(s)\mathrm{d}s-\sum \limits_{i=1}^{q}\frac{\epsilon
_{2}(1-\dot{\tau}_{i})}{1-d}\left(  2-\frac{\tau_{i}}{\bar{\tau}}\right)
\gamma^{2}(\phi_{i})U_{0}(\phi_{i})\nonumber \\
\leq &  \frac{2\epsilon_{2}q\gamma^{2}}{1-d}U_{0}(t)-\frac{1}{2\bar{\tau}%
}U_{2}(t,e_{t})-\epsilon_{2}\sum \limits_{i=1}^{q}\gamma^{2}(\phi_{i}%
)U_{0}(\phi_{i}), \label{eq4.add12}%
\end{align}
where we have used Assumption \ref{ass3}. In addition, we have%
\begin{align}
\dot{U}_{3}(t,x_{t})=  &  3\mathcal{\kappa}_{3}\gamma^{3}\left(
U_{0}(t)+U_{1}(t)\right)  -\frac{\mathcal{\kappa}_{3}}{\bar{\tau}}%
\int_{t-2\bar{\tau}}^{t}\gamma^{3}(s)\left(  U_{0}(s)+U_{1}(s)\right)
\mathrm{d}s,\nonumber \\
&  -\mathcal{\kappa}_{3}\gamma^{3}(t-2\bar{\tau})\left(  U_{0}\left(
t-2\bar{\tau}\right)  +U_{1}\left(  t-2\bar{\tau}\right)  \right),\label{eq4add13}%
\end{align}
and
\begin{align}
\dot{U}_{4}(t,x_{t})=  &  \frac{2\epsilon_{1}q}{1-d}\gamma U_{1}%
(t)-\frac{\epsilon_{1}}{(1-d)\bar{\tau}}\sum \limits_{i=1}^{q}\int_{t-\tau_{i}%
}^{t}\gamma(s)U_{1}(s)\mathrm{d}s-\sum \limits_{i=1}^{q}\frac{\epsilon
_{1}(1-\dot{\tau}_{i})}{1-d}\left(  2-\frac{\tau_{i}}{\bar{\tau}}\right)
\gamma(\phi_{i})U_{1}(\phi_{i})\nonumber \\
\leq &  \frac{2\epsilon_{1}q}{1-d}\gamma U_{1}(t)-\frac{1}{2\bar{\tau}}%
U_{4}(t,x_{t})-\epsilon_{1}\sum \limits_{i=1}^{q}\gamma(\phi_{i})U_{1}(\phi
_{i}). \label{eq4.add14}%
\end{align}
It follows from (\ref{eq4.add1}), (\ref{eq4.add12}), (\ref{eq4add13}) and
(\ref{eq4.add14}) that%
\begin{align}
\dot{U}(t,x_{t},e_{t})\leq &  -\frac{\rho}{2}U_{0}(t)+\epsilon_{2}%
\sum \limits_{i=1}^{q}{\gamma}^{2}{(\phi_{i})}U_{0}(\phi_{i})-\frac{\gamma}%
{2}U_{1}(t)+4k_{8}q\mu_{2}\int_{t-2\bar{\tau}}^{t}\gamma^{3}(s)\left(
U_{0}(s)+U_{1}(s)\right)  (s)\mathrm{d}s\nonumber \\
&  +\epsilon_{1}\sum \limits_{i=1}^{q}{\gamma(\phi_{i})}U_{1}(\phi_{i}%
)+\frac{2\epsilon_{2}q}{1-d}\gamma^{2}(t)U_{0}(t)-\epsilon_{2}\sum
\limits_{i=1}^{q}\gamma^{2}(\phi_{i})U_{0}(\phi_{i})-\frac{1}{2\bar{\tau}%
}U_{2}(t,e_{t})\nonumber \\
&  +3\mathcal{\kappa}_{3}\gamma^{3}\left(  U_{0}(s)+U_{1}(s)\right)
-\mathcal{\kappa}_{3}\gamma^{3}(t-2\bar{\tau})\left(  U_{0}(t-2\bar{\tau
})+U_{1}(t-2\bar{\tau})\right) \nonumber \\
&  -\frac{\mathcal{\kappa}_{3}}{\bar{\tau}}\int_{t-2\bar{\tau}}^{t}\gamma
^{3}(s)\left(  U_{0}(s)+U_{1}(s)\right)  (s)\mathrm{d}s+\frac{2\epsilon_{1}%
q}{1-d}\gamma U_{1}(t)-\epsilon_{1}\sum \limits_{i=1}^{q}\gamma(\phi_{i}%
)U_{1}(\phi_{i})-\frac{1}{2\bar{\tau}}U_{4}(t,x_{t})\nonumber \\
\leq &  -\left(  \frac{\rho}{2}-\frac{2\epsilon_{2}q}{1-d}\gamma
^{2}-3\mathcal{\kappa}_{3}\gamma^{3}\right)  U_{0}(t)-\frac{1}{2\bar{\tau}%
}U_{2}(t,e_{t})-\left(  \frac{\gamma}{2}-3\mathcal{\kappa}_{3}\gamma^{3}%
-\frac{2\epsilon_{1}q}{1-d}\gamma \right)  U_{1}(t)\nonumber \\
&  -\frac{1}{2\bar{\tau}}U_{4}(t,x_{t})-\left(  \frac{\mathcal{\kappa}_{3}%
}{\bar{\tau}}-4k_{8}q\mu_{2}\right)  \int_{t-2\bar{\tau}}^{t}\gamma
^{3}(s)\left(  U_{0}(s)+U_{1}(s)\right)  \mathrm{d}s. \label{eq4.13}%
\end{align}
Let $\mathcal{\kappa}_{0}$ be small enough such that $2\epsilon_{1}%
q/(1-d)\leq1/8$ and $\mathcal{\kappa}_{3}$ be large enough such that
$\mathcal{\kappa}_{3}\geq8k_{2}q\mu_{2}\bar{\tau}$. Then, it follows from
(\ref{eq4.13}) that%
\begin{align}
\dot{U}(t,x_{t},e_{t})\leq &  -\left(  \frac{\rho}{2}-\frac{2\epsilon_{2}%
q}{1-d}\gamma^{2}-3\mathcal{\kappa}_{3}\gamma^{3}\right)  U_{0}(t)-\left(
\frac{3}{8}\gamma-3\mathcal{\kappa}_{3}\gamma^{3}\right)  U_{1}(t)\nonumber \\
&  -\frac{1}{2\bar{\tau}}U_{2}(t,e_{t})-\frac{\mathcal{\kappa}_{3}}{2\bar
{\tau}}\int_{t-2\bar{\tau}}^{t}\gamma^{3}(s)\left(  U_{0}(s)+U_{1}(s)\right)
\mathrm{d}s-\frac{1}{2\bar{\tau}}U_{4}(t,x_{t})\nonumber \\
\leq &  -\left(  \frac{\rho}{2}-\frac{2\epsilon_{2}q}{1-d}\gamma
^{2}-3\mathcal{\kappa}_{3}\gamma^{3}\right)  U_{0}(t)-\left(  \frac{3}%
{8}\gamma-3\mathcal{\kappa}_{3}\gamma^{3}\right)  U_{1}(t)\nonumber \\
&  -\frac{1}{2\bar{\tau}}U_{2}(t,e_{t})-\frac{1}{6\bar{\tau}}U_{3}%
(t,x_{t})-\frac{1}{2\bar{\tau}}U_{4}(t,x_{t}). \label{eq4.14}%
\end{align}
Clearly, there exists a bounded constant $t_{2}\in \lbrack2\bar{\tau},\infty)$
independent of the initial condition such that%
\[
\frac{1}{4}\gamma(t)\leq \min \left \{  \frac{\rho}{2}-\frac{2\epsilon_{2}q}%
{1-d}\gamma^{2}-3\mathcal{\kappa}_{3}\gamma^{3},\frac{3}{8}\gamma
-3\mathcal{\kappa}_{3}\gamma^{3},\frac{1}{6\bar{\tau}}\right \}  ,
\]
is satisfied for all $t\geq t_{2}$, by which, it follows from (\ref{eq4.14})
that
\begin{equation}
\dot{U}(t,x_{t},e_{t})\leq-\frac{1}{4}\gamma(t)U(t,x_{t},e_{t}),\ t\geq t_{2}.
\label{eq4.15}%
\end{equation}
By using the comparison principle, we have from (\ref{eq4.15}) that
\begin{equation}
U(t,x_{t},e_{t})\leq \frac{f(t_{2})}{f(t)}U(t_{2},x_{t_{2}},e_{t_{2}}),\ t\geq
t_{2}, \label{eq4.16}%
\end{equation}
where $f(t)$ is defined in (\ref{eq3.add24}). In addition, similar to
(\ref{eq3.25}), we have
\begin{equation}
U(t,x_{t},e_{t})\geq \frac{\mu_{1}\gamma_{0}^{\delta}}{(\omega t+1)^{\delta \mu
}}\left \Vert x(t)\right \Vert ^{2}. \label{eq4.17}%
\end{equation}
It follows from (\ref{eq4.16}) and (\ref{eq4.17}) that%
\begin{equation}
\left \Vert x(t)\right \Vert ^{2}\leq \frac{f(t_{2})}{\mu_{1}\gamma_{0}^{\delta}%
}\  \frac{(\omega t+1)^{\delta \mu}}{f(t)}U(t_{2},x_{t_{2}},e_{t_{2}}),\ t\geq
t_{2}. \label{eq4.18}%
\end{equation}
Reviewing (\ref{eq4.12}), we have%
\begin{equation}
U(t,x_{t},e_{t})\geq \mathcal{\kappa}_{0}\lambda_{\min}(Q)\left \Vert
e(t)\right \Vert ^{2}. \label{eq4.19}%
\end{equation}
Then, it follows from (\ref{eq4.16}) and (\ref{eq4.19}) that
\begin{equation}
\left \Vert e(t)\right \Vert ^{2}\leq \frac{1}{\mathcal{\kappa}_{0}\lambda_{\min
}(Q)}\frac{f(t_{2})}{f(t)}U(t_{2},x_{t_{2}},e_{t_{2}}),\ t\geq t_{2}.
\label{eq4.20}%
\end{equation}
By employing the same argument as in the proof of Theorem \ref{thm1}, we
establish that, from (\ref{eq4.18}) and (\ref{eq4.20}), the zero solution of
the closed-loop system consisting of (\ref{sys}) and (\ref{eq4.1}) is
attractive. Furthermore, we can also infer that the zero solution of the
closed-loop system is stable in the sense of Lyapunov. Ultimately, it becomes
evident that (the zero solution of) the closed-loop system consisting of
(\ref{sys}) and (\ref{eq4.1}) is asymptotically stable. The proof is finished.
\end{proof}


\section{Simulation}

We consider a linear system with two time-varying input delays in the form of
(\ref{sys}) with $\tau_{1}(t)=1+0.4\sin \left(  2t\right)  $, $\tau
_{2}(t)=0.3+0.3\sin \left(  3t\right)  $, and
\begin{align*}
A  &  =\left[
\begin{array}
[c]{cccc}%
0 & 1 & 2 & -1\\
0 & 0 & -4 & 2\\
1 & 1 & 2 & -1\\
2 & 2 & 4 & -2
\end{array}
\right]  ,\ B_{1}=\left[
\begin{array}
[c]{cc}%
0 & 0\\
-1 & 1\\
0 & 0\\
0 & -1
\end{array}
\right]  , \ B_{2} =\left[
\begin{array}
[c]{cc}%
0 & 0\\
2 & -2\\
0 & 0\\
0 & 2
\end{array}
\right]  , \
C   =\left[
\begin{array}
[c]{cccc}%
0 & 0 & 1 & 0\\
1 & 0 & 0 & 0
\end{array}
\right]  ,
\end{align*}
It follows that $\lambda \left(  A\right)  =\left \{  0\right \}  $. Moreover, we
have%
\[
B=B_{1}+B_{2}=\left[
\begin{array}
[c]{cccc}%
0 & 1 & 0 & 0\\
0 & -1 & 0 & 1
\end{array}
\right]  ^{\mathrm{T}},
\]
which implies that $\left(  A,B\right)  $ is controllable. Thus Assumption
\ref{ass1} is fulfilled. In addition, both $\tau_{1}(t)$ and $\tau_{2}(t)$
satisfy Assumption \ref{ass3} with $\bar{\tau}=1.4$ and $d=0.9$. The initial
condition is set as $x(0)=\left[  1,2,-2,-6\right]  ^{\mathrm{T}}$ and
$u(\theta)=\left[  0,0\right]  ^{\mathrm{T}}, \theta \in \lbrack-2\bar{\tau},0)$.

\begin{figure}[ptb]
\centering
\includegraphics[width=0.5\textwidth]{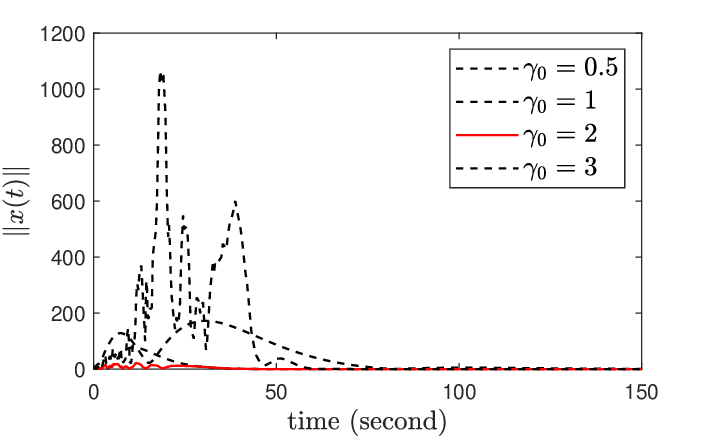}\caption{Norm of $x(t)$ with
different $\gamma_{0}$ ($\omega=1$ and $\mu=1/2$).}%
\label{fig1}%
\end{figure}

\begin{figure}[ptb]
\centering
\includegraphics[width=0.5\textwidth]{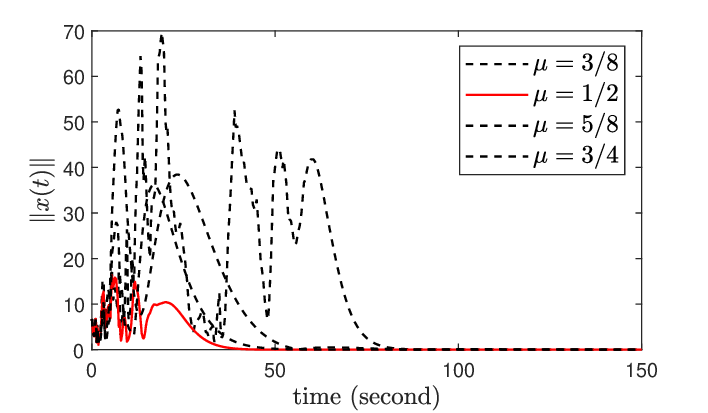}\caption{Norm of $x(t)$ with
different $\mu$ ($\gamma_{0}=2$ and $\omega=1$).}%
\label{fig2}%
\end{figure}

\begin{figure}[ptb]
\centering
\includegraphics[width=0.53\textwidth]{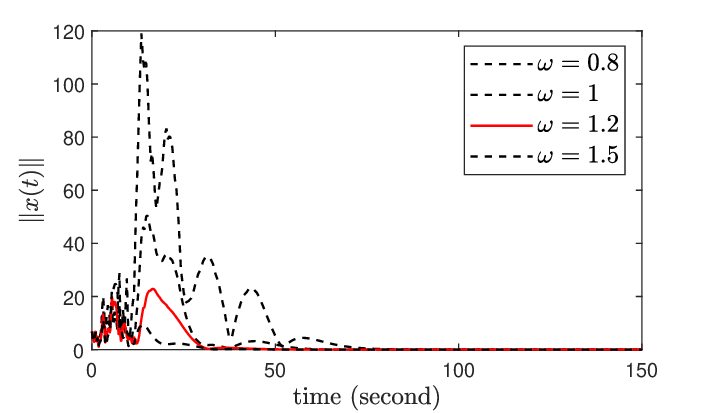}\caption{Norm of $x(t)$ with
different $\omega$ ($\gamma_{0}=2$ and $\mu=1/2$).}%
\label{fig3}%
\end{figure}

\begin{figure}[ptb]
\centering
\includegraphics[width=0.5\textwidth]{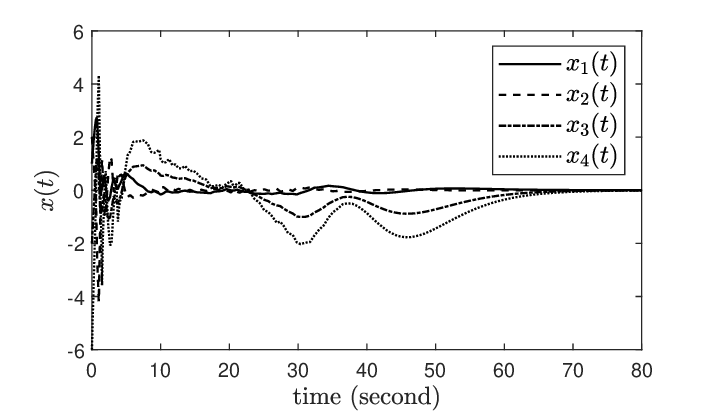}\caption{Controlled trajectories with
$\gamma_{0}=2$, $\mu=1/2$ and $\omega=1.2$.}%
\label{fig4}%
\end{figure}

\begin{figure}[ptb]
\centering
\includegraphics[width=0.5\textwidth]{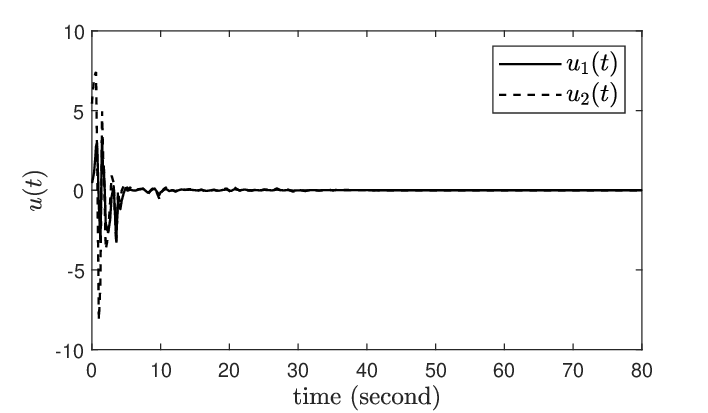}\caption{Control inputs with
$\gamma_{0}=2$, $\mu=1/2$, and $\omega=1.2$.}%
\label{fig5}%
\end{figure}

We first consider the case of state feedback. Using Theorem \ref{thm1}, we
design the following state feedback
\begin{align}
u\left(  t\right)  =  &  -Kx(t)=\!-\frac{1}{k_{0}}\! \left[
\begin{array}
[c]{cccc}%
k_{11} & k_{12} & -k_{13} & k_{14}\\
k_{21} & k_{22} & -k_{23} & k_{24}%
\end{array}
\right]  x\left(  t\right)  ,\  \nonumber \\
k_{0}=  &  25\gamma^{6}+80\gamma^{5}+148\gamma^{4}+128\gamma^{3}+96\gamma
^{2}+64,\nonumber \\
k_{11}=  &  \gamma^{4}\,{\left(  -5\gamma^{5}+3\gamma^{4}+24\gamma
^{3}+100\gamma^{2}+160\gamma+176\right)  }\nonumber \\
k_{12}=  &  \gamma^{3}\,{\left(  35\gamma^{4}+144\gamma^{3}+312\gamma
^{2}+352\gamma+224\right)  }\nonumber \\
k_{13}=  &  \gamma^{2}\, \left(  -5\gamma^{7}-12\gamma^{6}-2\gamma
^{5}+164\gamma^{4}   +512\gamma^{3}+848\gamma^{2}+704\gamma+384\right)  {,}\nonumber \\
k_{14}=  &  \gamma^{2}\,{\left(  15\gamma^{5}+106\gamma^{4}+280\gamma
^{3}+424\gamma^{2}+352\gamma+192\right)  ,}\nonumber \\
k_{21}=  &  2\gamma^{3}\,{\left(  5\gamma^{6}-3\gamma^{5}-20\gamma
^{4}-44\gamma^{3}+4\gamma^{2}+64\right)  ,}\nonumber \\
k_{22}=  &  2\gamma^{2}\,{\left(  -10\gamma^{5}-19\gamma^{4}-16\gamma
^{3}+36\gamma^{2}+64\gamma+96\right)  ,}\nonumber \\
k_{23}=  &  2\gamma \left(  5\gamma^{8}+12\gamma^{7}+61\gamma^{6}+138\gamma
^{5}   +248\gamma^{4}+216\gamma^{3}+288\gamma^{2}+192\gamma+256\right)
{,}\nonumber \\
k_{24}=  &  \gamma \left(  45\gamma^{6}+138\gamma^{5}+248\gamma^{4}%
+232\gamma^{3}   +288\gamma^{2}+192\gamma+256\right)  , \label{eq5.1}%
\end{align}
with $\gamma=\gamma(t)$ defined in (\ref{eqgammat}).

It follows from (\ref{eqgammat}) that the controller (\ref{eq5.1}) contains
three parameters $\gamma_{0}$, $\omega$, and $\mu$. Hence, we aim to assess
the impact of different values of $\gamma_{0}$, $\omega$, and $\mu$ on the
performance of the closed-loop system. Initially, we fix $\omega=1$ and
$\mu=1/2$, and then test four cases of $\gamma_{0}$: $0.5$, $1$, $2$, and $3$.
Figure \ref{fig1} displays the norm of states under these values, revealing
that the convergence time of the closed-loop system is shortest at $\gamma
_{0}=2$, followed by $\gamma_{0}=1$, and then $\gamma_{0}=0.5$. Conversely,
the longest convergence time occurs at $\gamma_{0}=3$. Next, keeping
$\gamma_{0}=2$ and $\omega=1$, we examine four cases of $\mu$: $3/8$, $1/2$,
$5/8$, and $3/4$. Figure \ref{fig2} illustrates the norm of states for each
case, demonstrating that the shortest convergence time is observed at
$\mu=1/2$, followed by $\mu=5/8$, and then $\mu=3/4$. Conversely, the longest
convergence time occurs at $\mu=3/8$. Finally, with $\gamma_{0}=2$ and
$\mu=1/2$, we explore four cases of $\omega$: $0.8$, $1$, $1.2$, and $1.5$.
Figure \ref{fig3} depicts the norm of states under these variations, revealing
that the shortest convergence time is observed at $\omega=1.2$, followed by
$\omega=1.5$, and then $\omega=1$. Conversely, the longest convergence time
occurs at $\omega=0.8$. Based on the aforementioned analysis, when $\gamma
_{0}=2$, $\mu=1/2$, and $\omega=1.2$, the system exhibits the shortest
convergence time. To provide further insight, we utilize Figure \ref{fig4} to
illustrate its state trajectories and Figure \ref{fig5} to depict its control
inputs, from which, we can see that the states can be quickly driven to $0$
within $70$s.

We now verify the effectiveness of Theorem \ref{thm2}. Consider the
observer-based controller (\ref{eq4.1}) where $K=B^{\mathrm{T}}P$ is well
defined in (\ref{eq5.1}) with $\gamma_{0}=1$, $\mu=0.95$ and $\omega=1$, and
$L$ is chosen as
\[
L=\left[
\begin{array}
[c]{cccc}%
1.5 & 1.75 & 2.5 & 5.25\\
1.5 & 1.75 & 1.5 & 3.25
\end{array}
\right]  ^{\mathrm{T}},
\]
which is such that $\lambda(A-LC)=\{-1,-1,-1,-1\}$. For simulation purposes,
the initial state of the observer is selected as ${\xi}%
(0)=[-1,1,1,-1]^{\mathrm{T}}$. The simulation result is shown in Fig.
\ref{fig6}, from which it is evident that the states gradually converge to
zero, thereby demonstrating the effectiveness of Theorem \ref{thm2}.
\begin{figure}[ptb]
\centering
\includegraphics[width=0.5\textwidth]{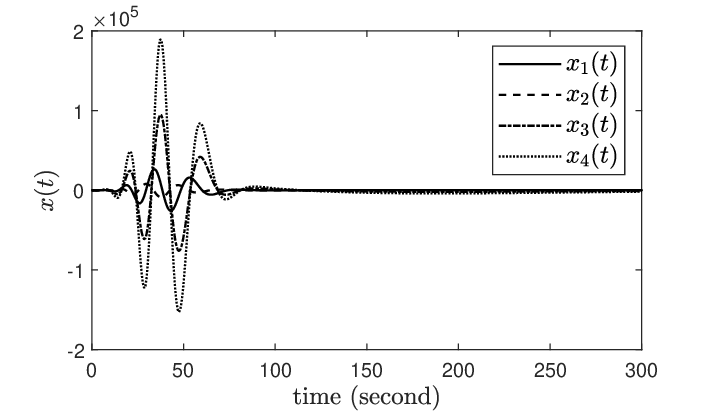}\caption{Controlled trajectories
with observer based controller}%
\label{fig6}%
\end{figure}

\section*{Appendix}

\subsection*{A1: Proof for the Inequality in (\ref{plePP3})}

Let Assumption \ref{ass1} be satisfied. Then, according to Theorem 1 in
\cite{zhou20auto}, for any given constant $\gamma_{0}>-2\phi(A) $, there
exists a constant $\bar{\delta}_{\mathrm{c}}\geq1$, which is independent of
$\gamma$, such that%
\begin{equation}
\frac{\mathrm{d}P(\gamma)}{\mathrm{d}\gamma}\leq \frac{\bar{\delta}%
_{\mathrm{c}}P(\gamma)}{\pi(\gamma) },\  \forall \gamma>\gamma_{0}. \label{0}%
\end{equation}
Subsequently, in order to prove the second inequality in (\ref{pleP5}), we
only need to demonstrate that
\[
\frac{\mathrm{d}P(\gamma)}{\mathrm{d}\gamma}\leq \frac{\delta_{\mathrm{c}%
}P(\gamma)}{\pi(\gamma) },\  \forall \gamma \in( -2\phi(A) ,\gamma_{0} ] ,
\]
where $\delta_{\mathrm{c}}$ is independent of $\gamma$. Our proof is similar
to the proof of Theorem 1 in \cite{zhou20auto} and is detailed here for
readers' convenience.

Taking derivative on both side of (\ref{2}) with respect to $\gamma$ yields%
\begin{equation}
-W=\left(  A+\frac{1}{2}\gamma I_{n}\right)  \frac{\mathrm{d}W}{\mathrm{d}%
\gamma}+\frac{\mathrm{d}W}{\mathrm{d}\gamma}\left(  A+\frac{1}{2}\gamma
I_{n}\right)  ^{\mathrm{T}}, \label{3}%
\end{equation}
and then multiplying both sides of (\ref{3}) by $W^{-\frac{1}{2}}$ from the
left and right sides, respectively, gives%
\begin{equation}
-I_{n}=\left(  -A_{1}-\frac{1}{2}\gamma I_{n}\right)  E+E\left(  -A_{1}%
-\frac{1}{2}\gamma I_{n}\right)  ^{\mathrm{T}}, \label{4}%
\end{equation}
where $E=-W^{-\frac{1}{2}}(\mathrm{d}W/\mathrm{d}\gamma)W^{-\frac{1}{2}}$ and
$A_{1}=W^{-\frac{1}{2}}AW^{\frac{1}{2}}$. Let $A_{2}=(-A_{1}-\gamma I_{n}/2)
/\pi(\gamma)$ and rewrite (\ref{4}) as
\begin{equation}
-I_{n}=A_{2}(\pi(\gamma) E) +(\pi( \gamma) E) A_{2}^{\mathrm{T}}. \label{7}%
\end{equation}
Notice that%
\begin{align*}
\operatorname{Re}\left(  \lambda_{i}\left(  -A_{1}-\frac{1}{2}\gamma
I_{n}\right)  \right)   =\operatorname{Re}(\lambda_{i}( -A_{1})) -\frac
{1}{2}\gamma
 <\operatorname{Re}(\lambda_{i}(-A_{1})) +\phi(A)
 \leq0,\forall \gamma>-2\phi(A) ,
\end{align*}
which implies that $A_{2}$ is Hurwitz for all $\gamma>-2\phi(A) $.
Subsequently, it follows from linear system theory that the Lyapunov equation
(\ref{7}) has a positive definite solution $\pi(\gamma) E>0\ $for all
$\gamma>-2\phi(A) $. We next show that $\left \Vert \pi(\gamma) E\right \Vert $
is upper bounded by a constant independent of $\gamma$.

As all eigenvalues of $\ A$ are identical and purely real, we have
$\mathrm{tr}^{2}(A)=n\mathrm{tr}(A^{2})$, by which, it is clear that%
\begin{align*}
 \mathrm{tr}\left(  \left(  A_{1}+\frac{1}{2}\gamma I_{n}\right)  \left(
A_{1}+\frac{1}{2}\gamma I_{n}\right)  ^{\mathrm{T}}\right)
=  &  \mathrm{tr}(A_{1}A_{1}^{\mathrm{T}})+\gamma \mathrm{tr}(A_{1})+\frac
{n}{4}\gamma^{2}\\
=  &  \mathrm{tr}(PAP^{-1}A^{\mathrm{T}})+\gamma \mathrm{tr}(A_{1})+\frac{n}%
{4}\gamma^{2}\\
\leq &  \frac{n-1}{2n}\pi^{2}(\gamma)+\frac{2}{n}\mathrm{tr}^{2}%
(A)-\mathrm{tr}(A^{2})+\gamma \mathrm{tr}(A)+\frac{n}{4}\gamma^{2}\\
=  &  \frac{n-1}{2n}\pi^{2}(\gamma)+\frac{2}{n}\mathrm{tr}^{2}(A)-\mathrm{tr}%
(A^{2})\\
&  +\frac{\pi(\gamma)-2\mathrm{tr}(A)}{n}\mathrm{tr}(A)+\frac{(\pi
(\gamma)-2\mathrm{tr}(A))^{2}}{4n}\\
=  &  \frac{2n-1}{4n}\pi^{2}(\gamma),
\end{align*}
where the first inequality holds due to (\ref{plePP6}). Thus we have
\begin{align}
\left \Vert A_{2}\right \Vert ^{2}  &  \leq \mathrm{tr}\left(  \left(
\frac{A_{1}+\frac{1}{2}\gamma I_{n}}{\pi(\gamma)}\right)  \left(  \frac
{A_{1}+\frac{1}{2}\gamma I_{n}}{\pi(\gamma)}\right)  ^{\mathrm{T}}\right)
\nonumber \\
&  =\frac{1}{\pi^{2}(\gamma)}\mathrm{tr}\left(  \left(  A_{1}+\frac{1}%
{2}\gamma I_{n}\right)  \left(  A_{1}+\frac{1}{2}\gamma I_{n}\right)
^{\mathrm{T}}\right) \nonumber \\
&  \leq \frac{2n-1}{4n},\  \gamma \in(-2\phi(A),\gamma_{0}]. \label{9}%
\end{align}
Now taking $\mathrm{vec}(\cdot)$ on both sides of (\ref{7}) gives%
\begin{align}
\mathrm{vec}(I_{n})  &  =-(I_{n}\otimes A_{2}+A_{2}\otimes I_{n}%
)\mathrm{vec}(\pi(\gamma)E)
  \triangleq-\mathit{\Phi}\mathrm{vec}(\pi(\gamma)E). \label{10}%
\end{align}
By (\ref{9}) we can see that%
\[
\left \Vert \mathit{\Phi}\right \Vert \leq \left \Vert I_{n}\otimes A_{2}%
\right \Vert +\left \Vert A_{2}\otimes I_{n}\right \Vert =2\left \Vert
A_{2}\right \Vert \leq2\sqrt{\frac{2n-1}{4n}},
\]
which implies that there exists a constant $c_{1}$ independent of $\gamma$,
such that%
\begin{equation}
\left \Vert \mathit{\Phi}^{\ast}\right \Vert \leq c_{1},\  \gamma \in
(-2\phi(A),\gamma_{0}], \label{11}%
\end{equation}
where $\mathit{\Phi}^{\ast}$ is the adjoint matrix of $\mathit{\Phi}$.

In addition, it follows from the definition of $A_{2}$ that
\[
\lambda_{i}(A_{2}) =\frac{-\lambda_{i}(A) -\frac{1}{2}\gamma}{\pi(\gamma) },
\]
by which, we can deduce that, for any $i\in \mathbf{I}_{1}^{n}$ and
$j\in \mathbf{I}_{1}^{n}$, there holds
\begin{align}
   \left \vert \lambda_{i}(A_{2}) +\lambda_{j}( A_{2}) \right \vert =\left \vert
\frac{-\lambda_{i}(A) -\lambda_{j}(A) -\gamma}{\pi(\gamma) }\right \vert
=    \frac{\gamma+2\phi(A) }{\pi(\gamma) }=\frac{1}{n},\  \  \gamma \in
(-2\phi(A) ,\gamma_{0} ] , \label{8}%
\end{align}
where we have used $\operatorname{Re}(\lambda_{i}(A) ) =\operatorname{Re}%
(\lambda_{j}(A)) =\phi(A) $. Subsequently, it follows from (\ref{11}) and
(\ref{8}) that there exists a constant $c_{2}>0$ independent of $\gamma$ such
that%
\begin{equation}
\left \Vert \mathit{\Phi}^{-1}\right \Vert = \frac{\left \Vert \mathit{\Phi
}^{\ast}\right \Vert }{\left \Vert \det( \mathit{\Phi}) \right \Vert }=
\frac{\left \Vert \mathit{\Phi}^{\ast}\right \Vert }{%
{\displaystyle \prod \limits_{i,j=1,2,\ldots,n}}
\left \vert \lambda_{i}(A_{2}) + \lambda_{j}( A_{2}) \right \vert }\leq c_{2}.
\label{13}%
\end{equation}
Then, it follows from (\ref{10}) and (\ref{13}) that
\begin{align}
\left \Vert \pi(\gamma)E\right \Vert  &  \leq \left \Vert \mathrm{vec}(\pi(\gamma)
E) \right \Vert \leq \left \Vert \mathit{\Phi}^{-1}\right \Vert \left \Vert
\mathrm{vec}(I_{n}) \right \Vert
 \leq \delta_{\mathrm{c}},\  \gamma \in(-2\phi(A) ,\gamma_{0} ] , \label{12}%
\end{align}
where $\delta_{\mathrm{c}}$ is a constant independent of $\gamma$. Finally, it
follows from (\ref{12}) and the definitions of $E$ and $\pi(\gamma)$ that
\[
\pi(\gamma) P^{-\frac{1}{2}}\frac{\mathrm{d}P(\gamma)}{\mathrm{d}\gamma
}P^{-\frac{1}{2}}= \pi(\gamma) E\leq \delta_{\mathrm{c}}I,\  \gamma \in(-2\phi(A)
,\gamma_{0} ] ,
\]
that is,%
\[
\frac{\mathrm{d}P(\gamma)}{\mathrm{d}\gamma}\leq \frac{\delta_{\mathrm{c}}}%
{\pi(\gamma) }P(\gamma),\  \gamma \in(-2\phi( A) ,\gamma_{0} ] ,
\]
which, together with (\ref{0}), establishes (\ref{plePP3}).

Finally, the proof for (\ref{plePP7}) is the same as those given in
\cite{zhou20auto} and is omitted. The proof is finished.

\subsection*{A2: A Proof for the Inequality in (\ref{pleP4})}

It follows from (\ref{pleP1}) that
\begin{align*}
  \frac{\mathrm{d}P}{\mathrm{d}\gamma}BB^{\mathrm{T}}\frac{\mathrm{d}%
P}{\mathrm{d}\gamma}=& \left(  \frac{\mathrm{d}P}{\mathrm{d}\gamma}\right)
^{\frac{1}{2}}\left(  \frac{\mathrm{d}P}{\mathrm{d}\gamma}\right)  ^{\frac
{1}{2}}BB^{\mathrm{T}}\left(  \frac{\mathrm{d}P}{\mathrm{d}\gamma}\right)
^{\frac{1}{2}}\left(  \frac{\mathrm{d}P}{\mathrm{d}\gamma}\right)  ^{\frac
{1}{2}}\\
\leq &  \left(  \frac{\mathrm{d}P}{\mathrm{d}\gamma}\right)  ^{\frac{1}{2}%
}\mathrm{tr}\left(  \left(  \frac{\mathrm{d}P}{\mathrm{d}\gamma}\right)
^{\frac{1}{2}}BB^{\mathrm{T}}\left(  \frac{\mathrm{d}P}{\mathrm{d}\gamma
}\right)  ^{\frac{1}{2}}\right)  \left(  \frac{\mathrm{d}P}{\mathrm{d}\gamma
}\right)  ^{\frac{1}{2}}\\
=  &  \left(  \frac{\mathrm{d}P}{\mathrm{d}\gamma}\right)  ^{\frac{1}{2}%
}\mathrm{tr}\left(  B^{\mathrm{T}}\frac{\mathrm{d}P}{\mathrm{d}\gamma
}B\right)  \left(  \frac{\mathrm{d}P}{\mathrm{d}\gamma}\right)  ^{\frac{1}{2}%
}\\
=  &  \frac{\mathrm{d}}{\mathrm{d}\gamma}\mathrm{tr}(B^{\mathrm{T}}PB)
\frac{\mathrm{d}P}{\mathrm{d}\gamma}\\
=  &  n\frac{\mathrm{d}P}{\mathrm{d}\gamma}.
\end{align*}
The proof is finished.

\subsection*{A3: A Proof for the Inequality in (\ref{eqadd1})}

Let $\phi_{i}(t)=t-\tau_{i}(t),i\in \mathbf{I}_{1}^{q}.$ It follows from
$\phi_{i}(0)=-\tau_{i}(0)\leq0,\  \phi_{i}(\bar{\tau})=\bar{\tau}-\tau_{i}%
(\bar{\tau})\geq0, $ and $\dot{\phi}_{i}(t)=1-\dot{\tau}_{i}(t)\geq1-d>0, $
that there exists a unique $T_{i}\in \lbrack0,\bar{\tau}]$ such that $\phi
_{i}(T_{i})=0.$ Without loss of generality, we assume that $0\leq T_{1}\leq
T_{2}\leq \cdots \leq T_{q}\leq \bar{\tau}. $ Then the closed-loop system can be
expressed by%
\[
\dot{x}\left(  t\right)  =\left \{
\begin{array}
[c]{l}%
Ax(t)+B\sum \limits_{i=1}^{q}u(t-\tau_{i}(t)),\quad0\leq t<T_{1}\\
Ax(t)+BK_{1}x(t-\tau_{1})+B\sum \limits_{i=2}^{q}u(t-\tau_{i}),\quad
t\in \mathcal{I}_{1}\\
\cdots \\
Ax(t)+B\sum \limits_{i=1}^{q-1}K_{i}x(t-\tau_{i})+Bu(t-\tau_{q}),\quad
t\in \mathcal{I}_{q-1}\\
Ax(t)+B\sum \limits_{i=1}^{q}K_{i}x(t-\tau_{i}),\quad T_{q}\leq t<\bar{\tau}\\
Ax(t)+B\sum \limits_{i=1}^{q}K_{i}x(t-\tau_{i}),\quad t\geq \bar{\tau}%
\end{array}
\right.
\]
where $\mathcal{I}_{i}=[T_{i},T_{i+1}),i \in \mathbf{I}_{1}^{q-1}$ and
$K_{i}=-B^{\mathrm{T}}P(\gamma(\phi_{i}(t)))$, $i\in \mathbf{I}_{1}^{q}$ are
uniformly bounded as $\lim_{t\rightarrow \infty}\gamma(t)=0.$ When $0\leq
t<T_{1},$ the closed-loop system is a linear system with the input $B
\mathit{\Sigma}_{i=1}^{q}u(t-\tau_{i}(t)).$ Thus, by the
variation-of-constants formula, we have%
\begin{align*}
\left \Vert x(t)\right \Vert  &  =\left \Vert \mathrm{e}^{At}x(0)+\int_{0}%
^{t}\mathrm{e}^{A(t-s)}B\sum \limits_{i=1}^{q}u(\phi_{i}(s))\mathrm{d}%
s\right \Vert \\
&  \leq k_{1}\left(  \left \Vert x(0)\right \Vert +\sup_{-\bar{\tau}\leq
\theta<0}\left \Vert u_{0}(\theta)\right \Vert \right)  ,\quad0\leq t<T_{1},
\end{align*}
where $k_{1}=k_{1}\left(  T_{1}\right)  >0$ is a constant. When $T_{1}\leq
t<T_{2},$ the closed-loop system is a linear time-delay system, and there
exists a constant $k_{2}>0$ such that (see pp. 141-143 in \cite{hale1977})%
\begin{align*}
\left \Vert x(t)\right \Vert &\leq k_{2}\left(  \sup_{-T_{1}\leq \theta \leq
0}\left \Vert x(T_{1}+\theta)\right \Vert +\sup_{-\bar{\tau}\leq \theta
<0}\left \Vert u_{0}(\theta)\right \Vert \right) \\
&  \leq k_{2}\left(  k_{1}\left(  \left \Vert x(0)\right \Vert +\sup_{-\bar
{\tau}\leq \theta<0}\left \Vert u_{0}(\theta)\right \Vert \right)  +\sup
_{-\bar{\tau}\leq \theta<0}\left \Vert u_{0}(\theta)\right \Vert \right) \\
&  \leq \kappa_{2}\left(  \left \Vert x(0)\right \Vert +\sup_{-\bar{\tau}%
\leq \theta<0}\left \Vert u_{0}(\theta)\right \Vert \right)  ,
\end{align*}
where $\kappa_{2}=\kappa_{2}(T_{2})>0$ is a constants. Repeating the above
process gives%
\[
\left \Vert x(t)\right \Vert \leq \kappa(T)\left(  \left \Vert x(0)\right \Vert
+\sup_{-\bar{\tau}\leq \theta<0}\left \Vert u_{0}(\theta)\right \Vert \right)
,\forall t\in \lbrack0,T],
\]
where $T>0$ is any constant. Thus the inequality (\ref{eqadd1}) holds true.

\end{document}